\documentclass[11pt,a4paper]{article}
\usepackage[margin=1in]{geometry}
\usepackage{amsmath,amssymb,amsthm,mathtools}
\usepackage{mathrsfs}
\usepackage{bm}
\usepackage{hyperref}
\hypersetup{colorlinks=true,linkcolor=blue,citecolor=blue,urlcolor=blue}
\usepackage{xcolor}
\usepackage[normalem]{ulem}
\usepackage[T1]{fontenc}
\usepackage{lmodern}
\usepackage{microtype}

\newtheorem{theorem}{Theorem}
\newtheorem{lemma}{Lemma}

\newtheorem{definition}{Definition}
\newtheorem{assumption}{Assumption}
\newtheorem{remark}{Remark}
\newtheorem{example}{Example}

\numberwithin{equation}{section}

\newcommand{\R}{\mathbb{R}}

\newcommand{\X}{X}

\newcommand{\ip}[2]{\langle #1,#2\rangle}
\newcommand{\norm}[1]{\lVert #1\rVert}

\newcommand{\RR}{\ensuremath{\mathbb{R}}}

\newcommand{\calZ}{\ensuremath{\mathcal{Z}}}
\newcommand{\calS}{\ensuremath{\mathcal{S}}}

\newcommand{\calE}{{\mathcal E}}

\DeclareMathOperator{\supp}{spt}

\DeclareMathOperator{\interior}{int}

\title{Mollified Christoffel-Darboux Kernels and Density Recovery on Varieties\footnote{This work was partially supported by the French-Uruguayan Laboratory of Mathematics IFUMI-CNRS IRL 2030. M. Velasco was partially supported by Fondo Clemente Estable grant FCE-1-2023-1-176172 (ANII). L. Bentancur acknowledges support from a PhD fellowship from the Comisión Académica de Posgrado, Universidad de la República, Uruguay.}}
\author{
Leandro Bentancur\footnote{Facultad de Ciencias, Universidad de la República, Montevideo, Uruguay. Email: leandrob@cmat.edu.uy}
\and
Didier Henrion\footnote{LAAS-CNRS, University of Toulouse, France and Faculty of Electrical Engineering, Czech Technical University in Prague, Czechia. Email: henrion@laas.fr}
\and
Mauricio Velasco\footnote{Facultad de Ciencias, Universidad de la República, Montevideo, Uruguay. Email: mvelasco@cmat.edu.uy}
}

\date{\today}

\begin{document}
	\maketitle
	
    \begin{abstract}
        We introduce mollified Christoffel-Darboux (CD) kernels on varieties, a systematic regularization of the classical CD kernel associated with a probability measure on a compact domain. The main motivations are twofold: first, to sharpen the classical on/off-support dichotomy of the CD polynomial by replacing linear growth on the support by a uniform bound; second, to obtain consistent and quantitatively controlled recovery of densities from moment data, without the need to know the equilibrium measure of the underlying domain.
        
        Our contributions are the following: (i) We introduce families of mollifiers on algebraic varieties. For each measure and degree on such a variety we define a mollified CD kernel, which can be computed from the moments of the underlying measure by linear algebra. (ii) We prove, by elementary arguments, that an improved dichotomy property holds: on the interior of the support the mollified CD polynomial is uniformly bounded in the degree, while outside the support it grows exponentially with the degree. (iii) Assuming Sobolev regularity of the density with respect to a reference measure, we derive explicit convergence rates for density recovery for measures in Euclidean space via mollified CD kernel. (iv) On the unit sphere, we show that suitably chosen algebraic mollifiers, constructed from zonal polynomials, lead to kernels with improved rates, building on classical constructive approximation results.
    \end{abstract}

    \medskip
    \noindent \textbf{2020 Mathematics Subject Classification.}
    Primary 41A10, 44A60;
    Secondary 41A25, 62G07.
    
    \medskip
    \noindent \textbf{Keywords.}
    Christoffel--Darboux kernel, mollifiers, density estimation, support estimation.

	\section{Introduction}
	
	Let $X\subset\R^n$ be a compact set with nonempty interior, and let $\mu$ be a Borel probability measure supported on $\X$. The \emph{Christoffel-Darboux (CD) kernel} associated with $\mu$ and a finite-dimensional subspace $V_d\subset {\mathscr L}^2(\mu)$ (typically the space of polynomials of degree at most $d$) plays a central role in approximation theory, potential theory and numerical analysis. In particular, the diagonal of the kernel - the \emph{CD polynomial} or its reciprocal the \emph{Christoffel function} - encodes fine geometric information about the support of $\mu$ and have found many applications in convex optimization and data science. For example, in the context of the moment-SOS hierarchy, the CD polynomial is used to
    quantify the convergence rates for solving non-convex polynomial optimization problems \cite{Slot2022}, or to approximate the transport map when solving optimal transport problems \cite{MulaNouy2024}. It is a powerful tool in data analysis and machine learning \cite{Fanuel2022,RoosSlot2023,Adcock2023}. The Christoffel function is fundamental in approximation theory when studying least squares sampling \cite{DolbeaultCohen2021,Adcock2025}.
    For an overview see \cite{LasserrePauwelsPutinar2022} and references therein.
    
	The classical CD polynomial enjoys a striking \emph{dichotomy property} used to recover the support of the measure $\mu$ from moments. Denote by $n_d=\dim V_d$ the dimension of the finite-dimensional space. Under mild regularity assumptions on $\mu$ and $\X$, one has, as $d\to\infty$:
	\begin{itemize}
		\item On the interior of the support of $\mu$, the CD polynomial  grows linearly in $n_d$.
		\item Outside the support of $\mu$, the CD polynomial grows exponentially fast in $d$, with a growth rate that depends on the distance to the support of $\mu$.
	\end{itemize}
	The CD polynomial is the squared norm in $V_d$ of the \emph{evaluation functional} at $x$. As the dimension $n_d$ grows, point evaluation becomes increasingly expensive in norm on the support, while outside the support the norm must blow up exponentially.
	
	When $\mu$ admits a density $f$ with respect to a reference measure $\lambda$ (for instance the Lebesgue measure on $\X$ with suitable regularity assumptions), the classical CD polynomial does not directly recover $f$. Rather, under appropriate assumptions and scaling, it converges to $f$ multiplied by the density of the \emph{equilibrium measure} of the domain $\X$ (this measure is intrinsic, determined by the pluripotential theory of the set $X$). In particular, density recovery from the classical CD kernel requires explicit knowledge of the equilibrium measure, which is available only in special cases (such as boxes, balls, or other highly symmetric domains).
	
	A modified Christoffel function was recently proposed by Lasserre~\cite{Lasserre2023} to address this limitation. The key idea is to relax the point-evaluation constraint in the variational characterization of the Christoffel function and replace it by a weaker constraint that yields a bounded ${\mathscr L}^2$ norm of the corresponding linear functional as the dimension $n_d$ grows. This yields an estimator that converges directly to the density  without requiring the knowledge of the equilibrium measure of the support.
	
	The purpose of the present paper is to generalize and systematize this idea along three axes:
	\begin{enumerate}
		\item We introduce the notion of  {variety with mollifiers} and the ensuing \emph{mollified Christoffel-Darboux (MCD) kernels}, defined by replacing the point evaluation with the smoothing operators associated to a given family of mollifiers at a smoothing scale $\epsilon$. This yields a family of regularized CD kernels parametrized by the choice of mollifier. Our definition contains as a special case the modified CD kernel of \cite{Lasserre2023}.

		\item We prove an improved dichotomy property: for fixed $\varepsilon>0$, the  MCD  polynomial is uniformly bounded on the interior of the support of $\mu$ as $d\to\infty$, while it grows exponentially fast off the support. The proof of exponential growth uses an elementary polynomial of tensor-product type, much simpler than the  Kro\'o-Lubinsky needle polynomial based on Chebyshev polynomials used in earlier works \cite[Section 4.4]{LasserrePauwelsPutinar2022}.
		\item Assuming Sobolev regularity of the density $f$ we give several quantitative recovery guarantees obtained by balancing a \emph{projection error} (due to finite-dimensional approximation) and an \emph{approximation error} (due to mollification). More specifically, (i) We derive explicit convergence rates for density recovery from the MCD kernel for measures supported on a regular compact domain in $\RR^n$ $(ii)$ When $\X$ is the sphere, we construct \emph{algebraic mollifiers} from zonal polynomials and use Ragozin's constructive approximation results~\cite{Ragozin1971} to obtain rates that strictly improve on the current estimates known for spheres~\cite[Corollary 5.3.5]{LasserrePauwelsPutinar2022}.
	\end{enumerate}

	The paper is organized as follows: In Section~\ref{sec:MCDKs} we introduce the MCD kernel on varieties and discuss its basic properties. In Section~\ref{sec: SL_and_DE} we introduce our two main uses for MCD kernels: as support locators and as density estimators, we prove the improved dichotomy property and we establish the desired asymptotic behavior. In Section~\ref{sec: Quant} we derive density recovery results and convergence rates. More specifically, Section~\ref{sec: QuantRn} focuses on quantitative rate estimates for density recovery in $\RR^n$ whereas Section~\ref{sec:sphere} is devoted to the spherical case, where algebraic mollifiers lead to improved approximation rates. Finally, Section~\ref{sec:numerics} contains a numerical illustration of our methods.

\section{Mollified Christoffel-Darboux kernels}
\label{sec:MCDKs}

In this Section we introduce MCD kernels on varieties. By an algebraic variety in $\RR^n$ we mean any set consisting of the common real zeroes of a finite collection of polynomials in $x_1,\dots, x_n$ with real coefficients.

\begin{definition} \label{def: space} A real algebraic variety $\calZ\subseteq \RR^n$ with a reference Borel measure $\lambda$ is endowed with ${\mathscr L}^2$-mollifiers if there exists a set $\calE\subseteq (0,\infty)$ with $0\in \overline{\calE}$ and a collection of functions $\phi_{z,\epsilon}: \calZ\rightarrow \RR$ for $z\in \calZ$, $\epsilon\in \calE$ called mollifiers satisfying:
\begin{enumerate}
\item Every mollifier $\phi_{z,\epsilon}$ is a probability density with respect to $\lambda$, that is: $\forall y\in \calZ, \phi_{z,\epsilon}(y)\geq 0$ and $\int_\calZ\phi_{z,\epsilon}(y)d\lambda(y)=1$.
\item The mollifiers lie in ${\mathscr L}^2(\lambda)$ uniformly in $\calZ$. More precisely $\|\phi_{z,\epsilon}\|_{{\mathscr L}^2(\lambda)}<\infty$ and for every $\epsilon\in \calE$ there exist positive constants $c_\epsilon,C_\epsilon$ such that 
\[ \forall z\in \calZ, c_\epsilon<\|\phi_{z,\epsilon}\|_{{\mathscr L}^2(\lambda)}< C_\epsilon.\]
\item The mollifiers $\phi_{z,\epsilon}$ reproduce the evaluation at $z$ as $\epsilon\in \calE$ approaches zero:
\[\forall z\in \calZ,\; \forall p\in C^0(\calZ), \;\lim_{\epsilon\rightarrow 0} \int_{\calZ} \phi_{z,\epsilon}(t)p(y)d\lambda(y)=p(x).\]

\item The mollifiers  $\phi_{z,\epsilon}$ concentrate their energy around $z$ as $\epsilon\in \calE$ approaches zero:
\[\forall \delta>0,\; \lim_{\epsilon\rightarrow 0} \frac{\int_{\calZ \cap \{y:|y-x|>\delta\}} \phi_{z,\epsilon}(y)^2d\lambda(y)}{\|\phi_{z,\epsilon}\|^2_{{\mathscr L}^2(\lambda)}}=0.\]
\end{enumerate}
\end{definition}

Henceforth assume $\calZ$ is a variety endowed with ${\mathscr L}^2$-mollifiers. For a positive integer $d$, let $V_d$ be the vector space of functions on $\calZ$ that arise as restriction  to $\calZ$ of some polynomial with real coefficients of degree at most $d$ in $\RR^n$.

\begin{definition} Assume $\mu$ is a Borel measure with support $X:=\supp\mu \subseteq \calZ$ and $A\subseteq \calZ$ is any Borel set. For $\epsilon\in \calE$ and a positive integer $d$ the mollified Christoffel-Darboux (MCD) kernel of degree $d$ and resolution $\epsilon$ determined by $\mu$ and $A$ is the function $MCD_{d,\epsilon}^{\mu,A}: \calZ\times \calZ\rightarrow\RR$ given by
\[MCD_{d,\epsilon}^{\mu,A}(x,y)=\langle \varphi_x,\varphi_y\rangle _{{\mathscr L}^2(\mu)}\]
where, for each $z\in \calZ$, the function $\varphi_z\in V_d$ is the Riesz representative of the mollification centered at $z$ with resolution $\epsilon$ on $V_d\subseteq {\mathscr L}^2(\mu)$. More precisely, for each $z\in \calZ$, $\varphi_z$ is the element of $V_d$ satisfying
\[\forall p\in V_d,\; \int_A\phi_{z,\epsilon}(y)p(y)d\lambda(y) = \int_X \varphi_z(y)p(y)d\mu(y).\]
Note that although $\varphi_z$ depends not only on $z$ but also on $\mu,A,d$ and $\epsilon$, we omit those dependencies for notational simplicity.
\end{definition}

The following Lemma summarizes the basic properties of MCD kernels, analogous to well-known properties of the standard CD kernel. Since we will use them freely we include short and simple proofs for the reader's benefit. Furthermore we introduce the following notation: for any function $p\in C^0(\calZ)$ let 
\[\ell^A_{z,\epsilon}(p):=\int_A \phi_{z,\epsilon}(y)p(y)d\lambda(y).\]
Finally, recall that a set $A\subseteq \calZ$ is Zariski-dense in $\calZ$ if every polynomial vanishing identically on $A$ vanishes identically on $\calZ$. 
In subsequent sections, we specialize $A$ to be  $\calZ$ or $X$ in order to achieve support detection and density recovery, respectively.

\begin{lemma} \label{lem: CD_kernel_basics} If $X$ is compact and Zariski dense in $\calZ$ then the following statements hold for any $\epsilon\in \calE$ and any positive integer $d$,
\begin{enumerate}
\item The MCD kernel $MCD_{d,\epsilon}^{\mu,A}(x,y)$ is well-defined.
\item The function $MCD_{d,\epsilon}^{\mu,A}(x,y)$ can be computed via either of the following two formulas:
\begin{enumerate}
\item If $q_1,\dots, q_N$ are a $\mu$-orthonormal basis for $V_d$ then
\[MCD_{d,\epsilon}^{\mu,A}(x,y) = \sum_{j=1}^N\ell^A_{x,\epsilon}(q_j)\ell^A_{y,\epsilon}(q_j)\]
\item If $b_1,\dots, b_N$ are any basis for $V_d$ then 
\[MCD_{d,\epsilon}^{\mu,A}(x,y) = \ell^A_{x,\epsilon}(b)^\top M^{-1} \ell^A_{y,\epsilon}(b)\]
where $\ell^A_{z,\epsilon}(b)$ is the column vector with components  $\ell^A_{z,\epsilon}(b_j)$ and $M$ is the symmetric moment matrix with entries $M_{ij}:=\int_X b_i(y)b_j(y)d\mu(y)$.
\end{enumerate}
\item The diagonal of the mollified CD kernel $MCD_{d,\epsilon}^{\mu,A}(x,x)$ and its multiplicative inverse, known as the mollified Christoffel function, admit the following variational characterizations:
\begin{enumerate}
\item \[MCD_{d,\epsilon}^{\mu,A}(x,x)=\max\left\{ \ell_{x,\epsilon}^A(p)^2: p\in V_d\text{ , }\|p\|_{{\mathscr L}^2(\mu)}=1\right\}\]
\item \[\frac{1}{MCD_{d,\epsilon}^{\mu,A}(x,x)}=\min\left\{ \|p\|_{{\mathscr L}^2(\mu)}^2: p\in V_d\text{ , }\ell_{x,\epsilon}^A(p)=1\right\}\]
\end{enumerate}
\end{enumerate}
\end{lemma}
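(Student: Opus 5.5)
The plan is to reduce everything to finite-dimensional linear algebra on the Hilbert space $(V_d,\langle\cdot,\cdot\rangle_{{\mathscr L}^2(\mu)})$.

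\textbf{Step 1: $\langle\cdot,\cdot\rangle_\mu$ is an inner product on $V_d$.} Since $X=\supp\mu$ is compact, every polynomial is bounded on $X$. Hence $\langle p,q\rangle_\mu=\int_X pq\,d\mu$ is finite and positive semidefinite on $V_d$. For definiteness, suppose $\|p\|_{{\mathscr L}^2(\mu)}=0$. Then $p=0$ $\mu$-a.e. Because $p$ is continuous and $X$ is the support of $\mu$, $p$ vanishes on all of $X$: if $p(x_0)\neq0$ for some $x_0\in X$, then $p\neq 0$ on an open neighbourhood of $x_0$, which has positive $\mu$-measure. Zariski density of $X$ in $\calZ$ then gives $p\equiv 0$ on $\calZ$, so $p=0$ as an element of $V_d$. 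This is the one place the hypotheses enter, and it is the main point of the lemma.

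\textbf{Step 2: well-definedness.} The functional $\ell^A_{z,\epsilon}$ restricted to $V_d$ must be well defined and finite. It is linear, and it is finite because $|\ell^A_{z,\epsilon}(p)|\le \|\phi_{z,\epsilon}\|_{{\mathscr L}^2(\lambda)}\,\|p\,1_A\|_{{\mathscr L}^2(\lambda)}$. This requires $p$ to be square integrable on $A$ with respect to $\lambda$, which I would either take as implicit in the setting (e.g.\ $\lambda$ finite on compacta with $A$ bounded) or note as a standing assumption. Since $V_d$ is finite-dimensional, the Riesz representation theorem gives a unique $\varphi_z\in V_d$ with $\ell^A_{z,\epsilon}(p)=\langle\varphi_z,p\rangle_\mu$ for all $p\in V_d$. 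This proves item 1.

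\textbf{Step 3: the formulas in item 2.} For 2(a), expand $\varphi_z$ in the orthonormal basis: $\varphi_z=\sum_j\langle\varphi_z,q_j\rangle_\mu q_j=\sum_j\ell^A_{z,\epsilon}(q_j)q_j$. Parseval then gives the sum formula for $\langle\varphi_x,\varphi_y\rangle_\mu$. For 2(b), note that $M$ is the Gram matrix of a basis with respect to an inner product, so by Step 1 it is positive definite and invertible. Write $\varphi_z=\sum_i c_i b_i$. Testing the defining identity against each $b_j$ gives $Mc=\ell^A_{z,\epsilon}(b)$, so $c=M^{-1}\ell^A_{z,\epsilon}(b)$. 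Hence $\langle\varphi_x,\varphi_y\rangle_\mu=c_x^\top M c_y=\ell^A_{x,\epsilon}(b)^\top M^{-1}\ell^A_{y,\epsilon}(b)$.

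\textbf{Step 4: the variational characterizations in item 3.} For 3(a), take $p\in V_d$ with $\|p\|_\mu=1$. Cauchy--Schwarz gives $\ell^A_{x,\epsilon}(p)^2=\langle\varphi_x,p\rangle_\mu^2\le\|\varphi_x\|_\mu^2=MCD^{\mu,A}_{d,\epsilon}(x,x)$. Equality holds at $p=\varphi_x/\|\varphi_x\|_\mu$ when $\varphi_x\neq0$, and any admissible $p$ attains it when $\varphi_x=0$.

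For 3(b), any $p$ with $\ell^A_{x,\epsilon}(p)=1$ satisfies $1=\langle\varphi_x,p\rangle_\mu\le\|\varphi_x\|_\mu\|p\|_\mu$, so $\|p\|_\mu^2\ge 1/\|\varphi_x\|_\mu^2$. Equality holds at $p=\varphi_x/\|\varphi_x\|_\mu^2$. This presumes $\varphi_x\neq0$, i.e.\ that $\ell^A_{x,\epsilon}$ is not identically zero on $V_d$. Since $1\in V_d$, it suffices that $\ell^A_{x,\epsilon}(1)=\int_A\phi_{x,\epsilon}\,d\lambda>0$. If instead this integral vanishes, the constraint set is empty and both sides equal $+\infty$ under the usual conventions $1/0=\infty$ and $\min\emptyset=\infty$, which I would state explicitly.
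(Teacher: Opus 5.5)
Your proof is correct and follows the paper's route: definiteness of $\langle\cdot,\cdot\rangle_{{\mathscr L}^2(\mu)}$ on $V_d$ via continuity and Zariski density, then Riesz representation, orthonormal expansion, and Cauchy--Schwarz. The differences are minor. For 2(b) you solve the Gram system $Mc=\ell^A_{z,\epsilon}(b)$ directly, whereas the paper writes $b=Uq$, uses $M=UU^\top$ and reduces to 2(a). Your explicit treatment of the degenerate case $\varphi_x=0$ in 3(b) (which occurs, e.g., for $A=X$, $x\notin X$ and local-support mollifiers) covers a point the paper's proof passes over, since its normalization $p^*/\ell^A_{x,\epsilon}(p^*)$ needs $\varphi_x\neq 0$.
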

\begin{proof} $(1)$ Since $X$ is compact the function $\langle p,q\rangle_{{\mathscr L}^2(\mu)}:=\int_X p(y)q(y)d\mu(y)$ is a bilinear symmetric form on $V_d$. If $\langle p,p\rangle _{{\mathscr L}^2(\mu)}=0$ for some $p\in V_d$ then, by continuity,  $p$ it is identical to zero on $X$ and thus on $\calZ$ by Zariski density. It follows that $V_d$ is a Hilbert space with the inner product $\langle p,q\rangle_{{\mathscr L}^2(\mu)}$ and thus any linear functional, and in particular those coming from mollified CD-kernels, have unique Riesz representatives proving that the function $MCD_{d,\epsilon}^{\mu,A}(x,y)$ is well defined.
$(2a)$ If $q_1,\dots, q_N$ is a $\mu$-orthonormal basis and $z\in \calZ$ then $\varphi_z:=\sum_{j=1}^N \ell_{z,\epsilon}^{A}(q_j) q_j$ so by orthonormality of the $q_j$, $MCD^{\mu,A}_{d,\epsilon}(x,y)=\langle \varphi_x,\varphi_y\rangle_{{\mathscr L}^2(\mu)}$
is given by the formula in $(2a)$.$(2b)$ Since $q$ is an orthonormal basis the column vectors $b$ and $q$ containing the bases are related by the matrix product $b=Uq$ where $U_{ij}=\langle b_i,q_j\rangle_{{\mathscr L}^2(\mu)}$. Integrating entry by entry the resulting relation $bb^\top= Uqq^\top U^\top$ we conclude that the moment matrix satisfies $M=UU^\top$. It follows that
\[\ell^A_{x,\epsilon}(b)^\top M^{-1} \ell^A_{y,\epsilon}(b) = (U\ell^A_{x,\epsilon}(q))^\top (UU^\top)^{-1} U\ell^A_{y,\epsilon}(q)= \ell^A_{x,\epsilon}(q)^\top  \ell^A_{y,\epsilon}(q)=MCD^{\mu,A}_{d,\epsilon}(x,y)\]
where the last equality follows from $(2a)$. $(3a)$ By definition, we know that $MCD_{d,\epsilon}^{\mu,A}(x,x)=\|\varphi_x\|^2_{{\mathscr L}^2(\mu)}$ so the variational characterization of every norm in a Hilbert space  
\[\|\varphi_x\|_{{\mathscr L}^2(\mu)}=\max\left\{ \langle \varphi_x,p\rangle_{{\mathscr L}^2(\mu)}: p\in V_d\text{ , }\|p\|_{{\mathscr L}^2(\mu)}=1\right\}\]
immediately implies the validity of the formula $(3a)$.
For $(3b)$ note that if $1=\ell_{x,\epsilon}^A(p)=\langle \varphi_x,p\rangle$ then the Cauchy-Schwartz inequality implies $1\leq \|\varphi_x\|^2_{{\mathscr L}^2(\mu)}\|p\|^2_{{\mathscr L}^2(\mu)}$ so
\[\frac{1}{MCD_{d,\epsilon}^{\mu,A}(x,x)}\leq \min\left\{ \|p\|_{{\mathscr L}^2(\mu)}^2: p\in V_d\text{ , }\ell_{x,\epsilon}^A(p)=1\right\}.\]
If $p^*$ is a witness for the norm of $\varphi_x$ in the sense that $\|p^*\|_{{\mathscr L}^2(\mu)}=1$ and $\langle \varphi_x,p^*\rangle = \|\varphi_x\|$ then $p:=p^*/\ell_{x,\epsilon}^A(p^*)$ verifies that the inequality above is in fact an equality.
\end{proof}

\subsection{Examples of varieties with mollifiers}

The reader should keep in mind the following examples while reading the general theory. They are the simplest examples where the theory can be applied. We will revisit them later on to prove detailed quantitative estimates on their mollified CD kernels. Let $\mathscr B(x,r)$ denote the Euclidean ball of radius $r$ centered around $x$. 

\begin{example}\label{example: Rn} (Euclidean space with local support mollifiers) Let $\calZ=\RR^n$ and let $\lambda$ be the Lebesgue measure on $\calZ$. Let $\phi:\calZ\rightarrow \RR$ be a nonnegative function with $\int_{\calZ}\phi(y)d\lambda(y)=1$ and $\supp\phi\subseteq \mathscr B(0,1)$. For $z\in \calZ$ and $\epsilon>0$ define
\[\phi_{z,\epsilon}(y):=\frac{1}{\epsilon^n} \phi\left(\frac{x-y}{\epsilon}\right).\]
These functions have \emph{local support} in the sense that $\supp\phi_{z,\epsilon}\subseteq \mathscr B(z,\epsilon)$ and integrate to one by the chosen scaling. It follows easily that the $\{\phi_{z,\epsilon}\}$ are mollifiers for $\calZ$ with $\calE:=(0,+\infty)$. 

Two special choices of $\phi$ will play an important role in the article:
\begin{enumerate}
\item \emph{Lasserre's modified mollifiers}~\cite{Lasserre2023}
obtained by setting
\[\phi(y):=\left(\frac{\sqrt{n}}{2}\right)^n\mathbf{1}_{\left\{\|y\|_{\infty}<\frac{1}{\sqrt{n}}\right\}}(y).\]
\item {\it $C^{\infty}$ mollifiers}, obtained by setting
\[\phi(y):=\begin{cases}
\exp\left(\frac{-1}{1-\|y\|^2}\right)c^{-1}\text{, if $\|y\|<1$}\\
0\text{ , otherwise}
\end{cases}
\]
where $c:=\displaystyle\int_{\|y\|<1}\exp\left(\frac{-1}{1-\|y\|^2}\right)d\lambda(y)$ is the normalizing constant.
\end{enumerate}
\end{example}

\begin{example} \label{example:sphere} (Algebraic mollifiers on the sphere) Let $\calZ:=\calS$ be the standard unit sphere in $\RR^n$ and let $\lambda$ be its normalized surface area measure (i.e. the unique rotation-invariant probability measure). Following Ragozin~\cite{Ragozin1971} one can construct a sequence of nonnegative univariate functions $\psi_k: [-1,1]\rightarrow \RR$, $k\in \mathbb{N}$ such that the zonal functions $g_{x,k}(y):=\psi_k(\langle x,y\rangle)$ form an \emph{approximate identity on the sphere}, in the sense that
\[
\forall h\in C^0(\calZ),\;\lim_{k\rightarrow \infty} \frac{\int_\calZ g_{x,k}(y)h(y)d\lambda(y)}{\int_\calZ g_{x,k}(y)d\lambda(y)}=h(x).
\] 
Moreover, one can arrange that the squares $g_{x,k}(y)^2$ also form an approximate identity. 
It follows that the quantity
\[\epsilon_k:=\frac{\int_\calZ g_{x,k}(y)\|x-y\|^2d\lambda(y)}{\int_\calZ g_{x,k}(y)d\lambda(y)}\]
is independent of $x$ and converges to zero as $k\rightarrow \infty$.

Letting $\calE:=\{\epsilon_k: k\in \mathbb{N}\}$ and defining, for $\epsilon\in \calE$ and $x\in \calZ$ the function
\[\phi_{x,\epsilon}(y):=\frac{g_{x,\epsilon}(y)}{\int_{\calZ} g_{x,\epsilon}(y)}\text{ , if $\epsilon=\epsilon_k$}\]
we endow the sphere $\calZ$ with a collection of ${\mathscr L}^2$-mollifiers which satisfy the conditions of Definition~\ref{def: space}.
We call the mollifiers $g_{x,k}$ \emph{algebraic} if the functions $\psi_k(t)$ are univariate polynomials.
\end{example}

\begin{remark} The previous two examples illustrate the diversity of situations that a satisfactory theory for mollifiers on varieties needs to account for. It must admit both continuous and discrete families of mollifiers and it must allow for mollifiers with local support and for mollifiers supported in all of $\calZ$ (the latter is necessary if one wishes to consider algebraic mollifiers because no nonzero regular function on an irreducible variety can be supported inside a small euclidean ball).
\end{remark}

\section{Mollified kernels for support location and density estimation}
\label{sec: SL_and_DE}

Motivated by applications, we will focus on two main special cases of the MCD kernel defined previously: the \emph{Support locator MCD kernel}, defined as 
\[SMCD^{\mu}_{d,\epsilon}(x,y):=MCD^{\mu,\calZ}_{d,\epsilon}(x,y),\] and
the \emph{Density estimator MCD kernel}, defined as
\[DMCD^{\mu}_{d,\epsilon}(x,y):=MCD^{\mu,X}_{d,\epsilon}(x,y).\]
The main results of this Section are two Theorems proving that under rather general assumptions the Support locator MCD satisfies an on-support/off support dichotomy property which allows us to locate the support of the measure $\mu$ and that a suitably normalized limit of the Density estimator MCD converges to the Radon-Nikodym density $\frac{d\mu}{d\lambda}$ at interior points of $X$. These results hold under the following basic assumptions which we make throughout.\\

{\bf Assumptions.} Let $\calZ\subseteq \RR^n$ be a variety with ${\mathscr L}^2$-mollifiers $\phi_{z,\epsilon}$ with respect to a reference measure $\lambda$. Let $\mu$ be a finite Borel measure on $\calZ$ and let $X\subseteq \calZ$ be its support. We assume that $X$ is a compact subset with nonempty euclidean interior relative to $\calZ$, that $X$ is Zariski dense on $\calZ$ and that $\mu$ admits a continuous and strictly positive density $f$ with respect to the measure $\lambda$ on $X$.

\begin{remark} Since $X$ has nonempty Euclidean interior relative to $\calZ$ it is automatically Zariski dense whenever the interior of $X$ intersects every irreducible component of $\calZ$. In particular, Zariski density holds automatically when the variety $\calZ$ is irreducible. 
\end{remark}

\begin{theorem} {\it (Improved support dichotomy)}.  If the mollifiers on $\calZ$ have local support, in the sense that $\forall z\in \calZ,\;\forall \epsilon\in \calE,\;\supp\phi_{z,\epsilon}\subseteq \mathscr B(z,\epsilon)$, then the following statements hold:
\begin{enumerate}
\item If $z\not\in X$ then $SMCD^{\mu}_{d,\epsilon}(z,z)$ grows exponentially in $d$ for every sufficiently small $\epsilon\in \calE$. More precisely, the following inequality holds 
\[SMCD^{\mu}_{d,\epsilon}(z,z) \geq \left(1+\frac{3\delta(z)^2}{4\left(\rho(z)^2-\delta(z)^{2}\right)}\right)^{2\lceil\frac{d}{2}\rceil} \frac{\left(\int_{\mathscr B(z,\delta(z)/2)}\phi_{z,\epsilon}(y)d\lambda(y)\right)^2}{\mu(X)}.\]
where $\delta(z):=\min\{\|z-y\|: y\in X\}$ is the distance, $ \rho(z):=\max\{\|z-y\|: y\in X\}$ is the eccentricity and $\epsilon<\delta(z)/2$.
\item If $z$ is interior to $X$ then $SMCD^{\mu}_{d,\epsilon}(z,z)$ is bounded in $d$ for every sufficiently small $\epsilon\in \calE$. More precisely, 
if $\mathscr B(z,\epsilon)\cap X=\mathscr B(z,\epsilon)\cap \calZ$ for some $\epsilon\in \calE$ then the function $SMCD^{\mu}_{d,\epsilon}(z,z)$ is a bounded function of $d$ with a uniform upper bound over all such $z$. 
\end{enumerate}

\end{theorem}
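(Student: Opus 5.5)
Both parts rest on the variational characterizations in Lemma~\ref{lem: CD_kernel_basics}, part (3), with $A=\calZ$. For the lower bound in (1) we use (3a) and exhibit one explicit test polynomial. For the upper bound in (2) we write the kernel as a squared norm of a Riesz representative and bound it by a dual pairing that stays bounded in $d$.

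\emph{Part (1).} Fix $z\notin X$ and put $\delta=\delta(z)$, $\rho=\rho(z)$. We look for an even-degree polynomial $p$ of degree $2k\le d$, with $k=\lfloor d/2\rfloor$ (or $\lceil d/2\rceil$ after adjusting parities), such that $|p|\le 1$ on $X$ and $p$ is large on $\mathscr B(z,\delta/2)$. The natural tensor-type choice is
\[
q(y)=\frac{\rho^2+\delta^2-\|y-z\|^2}{\rho^2-\delta^2}\cdot\frac{1}{1}
\]
or a similar affine function of $\|y-z\|^2$. It maps the range $[\delta^2,\rho^2]$ of $\|y-z\|^2$ over $X$ into $[-1,1]$, and we set $p=q^{2k}$. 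On $\mathscr B(z,\delta/2)$ we have $\|y-z\|^2\le\delta^2/4$. Matching the constant in the statement suggests the normalization $q(y)=1+\frac{\delta^2-\|y-z\|^2}{\rho^2-\delta^2}$, which is at most $1$ on $X$ but could be negative there. I would instead use $q(y)=\frac{\rho^2-\|y-z\|^2}{\rho^2-\delta^2}\in[0,1]$ on $X$. At $\|y-z\|^2\le\delta^2/4$ this gives $q\ge \frac{\rho^2-\delta^2/4}{\rho^2-\delta^2}=1+\frac{3\delta^2}{4(\rho^2-\delta^2)}$, which is exactly the base in the statement.

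Then $\|p\|^2_{{\mathscr L}^2(\mu)}\le\mu(X)$. Since $p\ge0$ everywhere and $\phi_{z,\epsilon}\ge0$, we get $\ell^{\calZ}_{z,\epsilon}(p)\ge\int_{\mathscr B(z,\delta/2)}\phi_{z,\epsilon}\,p\,d\lambda\ge q_{\min}^{2k}\int_{\mathscr B(z,\delta/2)}\phi_{z,\epsilon}\,d\lambda$. Plugging $p/\|p\|$ into (3a) gives the bound. For $\epsilon<\delta/2$, local support forces the remaining integral to equal $1$, so the growth is genuinely exponential. The only bookkeeping point is the exponent $2\lceil d/2\rceil$ versus the degree constraint. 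I would take $p=q^{m}$ with $m$ the largest even integer such that $2m\le d$, noting that $q$ has degree $2$, and then reconcile with the stated exponent. This parity/degree matching is the step I expect to need care: $q^{m}$ has degree $2m$, so the exponent achievable from degree $d$ is about $d/2$ in powers of $q$, and the stated form must be read accordingly.

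\emph{Part (2).} If $\mathscr B(z,\epsilon)\cap\calZ\subseteq X$, then local support gives $\ell^{\calZ}_{z,\epsilon}(p)=\int_X\phi_{z,\epsilon}\,p\,d\lambda=\int_X \frac{\phi_{z,\epsilon}}{f}\,p\,d\mu$. By (3a) and Cauchy--Schwarz in ${\mathscr L}^2(\mu)$, for every unit-norm $p\in V_d$,
\[
\ell^{\calZ}_{z,\epsilon}(p)^2\le\int_X\frac{\phi_{z,\epsilon}^2}{f^2}\,d\mu=\int_X\frac{\phi_{z,\epsilon}^2}{f}\,d\lambda\le\frac{C_\epsilon^2}{\min_X f}.
\]
This is finite because $f$ is continuous and strictly positive on the compact set $X$. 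The bound is independent of $d$ and uniform over all such $z$, by property (2) of Definition~\ref{def: space}.
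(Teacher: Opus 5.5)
Your argument is the paper's in both parts. Your $q(y)=\frac{\rho^2-\|y-z\|^2}{\rho^2-\delta^2}$ is the paper's $1-\|z-y\|^2/\rho(z)^2$ divided by $1-\delta(z)^2/\rho(z)^2$. Your Cauchy--Schwarz bound in part (2) is the paper's estimate $\|\Pi^{\mu}_{d}(\phi_{z,\epsilon}/f)\|_{{\mathscr L}^2(\mu)}\le\|\phi_{z,\epsilon}/f\|_{{\mathscr L}^2(\mu)}$. Your constant $C_\epsilon^2/\min_X f$ is the right one; the paper's $MC_\epsilon$ is missing a square. The only real issue is the exponent in part (1), which you left unresolved.

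First, drop the requirement that $m$ be even and take $p=q^m$ with $m=\lfloor d/2\rfloor$, so that $p\in V_d$ (your initial $q^{2k}$ has degree $4k$ and is not in $V_d$). Since $0\le q\le 1$ on $X$, you get $\|p\|^2_{{\mathscr L}^2(\mu)}\le\mu(X)$. Since $\epsilon<\delta/2$, $\phi_{z,\epsilon}$ vanishes off $\mathscr B(z,\delta/2)$. On that ball $q\ge 1+\frac{3\delta^2}{4(\rho^2-\delta^2)}>0$, so the sign of $q^m$ elsewhere is irrelevant. This gives the bound with exponent $2\lfloor d/2\rfloor$, hence the statement for every even $d$. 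Evenness of $m$ is only useful if you want to avoid the local-support hypothesis, since then $q^m\ge 0$ on all of $\calZ$. With local support it just loses the cases $d\equiv 2,3\pmod 4$; for example, for $d=2$ it gives only $1/\mu(X)$.

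Second, the odd case cannot be reconciled, because the stated exponent $2\lceil d/2\rceil=d+1$ is wrong there. The paper invokes monotonicity in $d$, but $V_{d-1}\subseteq V_d$ only gives $SMCD^{\mu}_{d,\epsilon}\ge SMCD^{\mu}_{d-1,\epsilon}$. That is exponent $d-1=2\lfloor d/2\rfloor$, exactly what your argument gives. The ceiling version actually fails. Take $\calZ=\RR^2$, $X=\{r_1\le\|y\|\le r_2\}$ with $\mu$ the normalized Lebesgue measure, $z=0$, a radial $\phi$, $\epsilon<r_1/2$ and $d=1$. For $p(y)=a+b^\top y$, symmetry gives $\ell^{\calZ}_{0,\epsilon}(p)=a$ and $\|p\|^2_{{\mathscr L}^2(\mu)}=a^2+\int(b^\top y)^2\,d\mu$. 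Hence $SMCD^{\mu}_{1,\epsilon}(0,0)=1$, while the claimed lower bound is $\bigl(1+\frac{3r_1^2}{4(r_2^2-r_1^2)}\bigr)^2>1$. So the statement should read $2\lfloor d/2\rfloor$, which is what your argument proves once the parity restriction is removed. Note also that the paper's test polynomial $(1-\|z-y\|^2/\rho(z)^2)^d$ has degree $2d$, not $d$, so your degree count is the accurate one.
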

\begin{proof} $(1)$ To prove the lower bound in $(1)$ we will use the variational characterization from Lemma~\ref{lem: CD_kernel_basics} part $(3a)$. For $z\not\in X$ let \[p_z(y):=\left(1-\frac{\|z-y\|^2}{\rho(z)^2}\right)^{d}.\]
Since $z\not\in X$, for every $y\in X$ we have
\[\delta(z)\leq \|z-y\|\leq \rho(z)\]
and therefore
\[\left(1-\left(\frac{\delta(z)}{\rho(z)}\right)^2\right)^d\geq p_z(y)\geq 0\]
so in particular $\|p_z\|^2_{{\mathscr L}^2(\mu)}\leq \left(1-\left(\frac{\delta(z)}{\rho(z)}\right)^2\right)^{2d} \mu(X)$. Dividing by the right-hand side we conclude that the normalized polynomial
\[\widehat{p_z}(y):=\frac{p_z(y)}{\left(1-\left(\frac{\delta(z)}{\rho(z)}\right)^2\right)^{d} \sqrt{\mu(X)}}\]
lies in the unit ball in ${\mathscr L}^2(\mu)$ and therefore Lemma~\ref{lem: CD_kernel_basics} part $(3a)$ implies that $SMCD^{\mu}_{d,\epsilon}(z,z)\geq \ell_z^{\calZ}(\widehat{p_z}(y))^2$, where
\[\ell_z^{\calZ}(\widehat{p_z}(y))=\int_{\calZ} \phi_{z,\epsilon}(y)\widehat{p_z}(y)d\lambda(y).\]
If $\epsilon<\delta(z)/2$ then the function $\phi_{z,\epsilon}(y)\widehat{p_z}(y)$ is supported in $\mathscr B(z,\delta(z)/2)$. At all points $y$ of this ball $\|z-y\|\leq \delta(z)/2$ and therefore $p_z(y)\geq \left(1-\left(\frac{\delta(z)}{2\rho(z)}\right)^2\right)^{d}$. It follows that
\[\ell_z^{\calZ}(\widehat{p_z}(y))^2\geq \left(\frac{1-\left(\frac{\delta(z)}{2\rho(z)}\right)^2}{1-\left(\frac{\delta(z)}{\rho(z)}\right)^2}\right)^{2d} \frac{\left(\int_{\mathscr B(z,\delta(z)/2)}\phi_{z,\epsilon}(y)d\lambda(y)\right)^2}{\mu(X)}\] 
which yields the inequality in part $(1)$ for even $d$ by elementary algebraic manipulation. Since $V_d\subseteq V_{d+1}$ function $SMCD^{\mu}_{d,\epsilon}(z,z)$ is monotonic in $d$ proving the claimed inequality for odd $d$.

$(2)$ If $\mathscr B(z,\epsilon)\cap X=\mathscr B(z,\epsilon)\cap \calZ$ for some $\epsilon\in \calE$ then by the local support assumption of mollifiers we have
\[\int_\calZ \phi_{z,\epsilon}(y)p(y)d\lambda(y) = \int_X\phi_{z,\epsilon}(y)p(y)d\lambda(y).\]
The strict positivity of the density $f$ of $\mu$ with respect to $\lambda$ in $X$ implies that for every $p\in C^0(X)$
\[\int_X \phi_{z,\epsilon}(y)p(y)d\lambda(y)= \int_X \frac{\phi_{z,\epsilon}(y)}{f(y)}p(y)d\mu(y)\]
and in particular the Riesz representative in $V_d\subseteq {\mathscr L}^2(\mu)$ of the linear function defined by the left-hand side is given by the orthogonal projection $\Pi^{\mu}_{d}\left(\frac{\phi_{z,\epsilon}(y)}{f(y)}\right)$. 
It follows that
\[SMCD^{\mu}_{d,\epsilon}(z,z)=\left\|\Pi^{\mu}_{d}\left(\frac{\phi_{z,\epsilon}(y)}{f(y)}\right)\right\|_{{\mathscr L}^2(\mu)}^2\leq \left\|\frac{\phi_{z,\epsilon}(y)}{f(y)}\right\|_{{\mathscr L}^2(\mu)}^2 = \int_X \frac{\phi_{z,\epsilon}(y)^2}{f(y)}d\lambda(y)\leq M C_{\epsilon}\]
where $M=1/\min\{|f(x)|:x\in X\}>0$ and $C_{\epsilon}$ is the constant appearing in part $(2)$ of the definition of space with mollifiers and is therefore uniformly bounded as claimed.

\end{proof}

\begin{theorem}\label{Thm: densityrecovery} {\it (Density recovery)}. The following identity holds: 
\[\lim_{\epsilon\rightarrow 0}\left(\lim_{d\rightarrow \infty} \frac{DMCD^{\mu}_{d,\epsilon}(x,x)}{\|\phi_{x,\epsilon}\|^2_{{\mathscr L}^2(\lambda)}}\right)=
\begin{cases}
\frac{1}{f(x)}\text{ , if $x\in X^{\circ}$}\\
0\text{ , if $x\not\in X$.}\\
\end{cases}
\]

\end{theorem}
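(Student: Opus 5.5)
The plan is to repeat the argument from part (2) of the improved support dichotomy, with $A=X$ now built into the definition. Fix $x$ and $\epsilon$. The functional $\ell^X_{x,\epsilon}(p)=\int_X \phi_{x,\epsilon}p\,d\lambda$ can be rewritten as $\int_X \frac{\phi_{x,\epsilon}}{f}p\,d\mu$, because $f$ is continuous and strictly positive on the compact set $X$. Hence the Riesz representative of $\ell^X_{x,\epsilon}$ in $V_d$ is the orthogonal projection $\varphi_x=\Pi^\mu_d(g_{x,\epsilon})$, where $g_{x,\epsilon}:=\frac{\phi_{x,\epsilon}}{f}\mathbf 1_X$. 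This function lies in ${\mathscr L}^2(\mu)$ with
\[\|g_{x,\epsilon}\|^2_{{\mathscr L}^2(\mu)}=\int_X \frac{\phi_{x,\epsilon}^2}{f}\,d\lambda\le M C_\epsilon^2 .\]
It follows that
\[DMCD^\mu_{d,\epsilon}(x,x)=\|\Pi^\mu_d g_{x,\epsilon}\|^2_{{\mathscr L}^2(\mu)} .\]

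The inner limit is $d\to\infty$. Polynomials restricted to the compact set $X$ are dense in $C^0(X)$ by Stone--Weierstrass. Since $\mu$ is a finite Borel measure on a compact metric space, $C^0(X)$ is dense in ${\mathscr L}^2(\mu)$. Therefore $\bigcup_d V_d$ is dense in ${\mathscr L}^2(\mu)$, and $\Pi^\mu_d g\to g$ for every $g$. This gives
\[\lim_{d\to\infty} DMCD^\mu_{d,\epsilon}(x,x)=\int_X \frac{\phi_{x,\epsilon}(y)^2}{f(y)}\,d\lambda(y).\]
The inner limit therefore exists and equals
\[R_\epsilon(x):=\frac{\int_X \phi_{x,\epsilon}^2/f\,d\lambda}{\|\phi_{x,\epsilon}\|^2_{{\mathscr L}^2(\lambda)}}.\]
By property (2) of Definition~\ref{def: space}, the denominator is bounded below by $c_\epsilon^2>0$.

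The outer limit $\epsilon\to 0$ uses the energy concentration property (4). Take first $x\notin X$. Since $X$ is closed, there is $\delta>0$ with $X\cap\mathscr B(x,\delta)=\emptyset$. Then
\[R_\epsilon(x)\le M\,\frac{\int_{\{|y-x|>\delta\}}\phi_{x,\epsilon}^2\,d\lambda}{\|\phi_{x,\epsilon}\|^2},\]
and the right-hand side tends to $0$ by (4).

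Now take $x\in X^\circ$ (interior relative to $\calZ$), and let $\eta>0$. By continuity of $f$, choose $\delta>0$ such that $\mathscr B(x,\delta)\cap\calZ\subseteq X$ and $|1/f(y)-1/f(x)|<\eta$ on that set. Split the numerator of $R_\epsilon(x)$ into the part over $\mathscr B(x,\delta)\cap\calZ$ and the part over its complement in $X$. The complement part is at most $M$ times the fraction of energy lying outside $\mathscr B(x,\delta)$, which tends to $0$ by (4). The ball part lies between $(1/f(x)\mp\eta)$ times the fraction of energy inside the ball, and this fraction tends to $1$ by (4). Hence
\[\limsup_{\epsilon\to0}\bigl|R_\epsilon(x)-1/f(x)\bigr|\le\eta,\]
and letting $\eta\to0$ gives the claim.

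The main obstacle is justifying that $\Pi^\mu_d$ converges to the identity on ${\mathscr L}^2(\mu)$. Polynomials are dense in ${\mathscr L}^2(\mu)$ only because $X$ is compact and $\mu$ is finite, and $g_{x,\epsilon}$ is merely in ${\mathscr L}^2$, not continuous (for example for Lasserre's indicator mollifiers), so the density statement has to be applied in ${\mathscr L}^2(\mu)$ rather than uniformly. The statement also implicitly reads $\epsilon\to0$ through $\calE$, and for $x\notin X$ with $f$ defined only on $X$ one must use $\mathbf 1_X$ consistently.
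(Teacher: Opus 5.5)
Your proposal is correct and follows essentially the same route as the paper: you identify the Riesz representative as $\Pi^\mu_d(\phi_{x,\epsilon}/f)$, pass to $d\to\infty$ to obtain $\int_X \phi_{x,\epsilon}^2/f\,d\lambda$, and then use continuity of $1/f$ together with energy concentration (property (4)) at interior points and at points off $X$. You are in fact a bit more careful than the paper in two places: you justify $\Pi^\mu_d\to\mathrm{Id}$ on ${\mathscr L}^2(\mu)$ via Stone--Weierstrass and density of $C^0(X)$, and your two-sided bound keeps track of the energy of $\phi_{x,\epsilon}^2$ on $\calZ\setminus X$, whereas the paper's identity silently drops the term $\frac{1}{f(x)}\int_{\calZ\setminus X}\phi_{x,\epsilon}^2\,d\lambda\big/\|\phi_{x,\epsilon}\|^2_{{\mathscr L}^2(\lambda)}$, which vanishes only in the limit by property (4).
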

\begin{proof} Since $X$ is Zariski dense in $\calZ$, the DMCD kernel is well defined by Lemma~\ref{lem: CD_kernel_basics} part $(1)$. If $z\in \calZ$ then the strict positivity of the density $f$ of $\mu$ with respect to $\lambda$ in $X$ implies that for every $p\in C^0(X)$
\[\int_X \phi_{z,\epsilon}(y)p(y)d\lambda(y)= \int_X \frac{\phi_{z,\epsilon}(y)}{f(y)}p(y)d\mu(y)\]
and in particular the Riesz representative in $V_d\subseteq {\mathscr L}^2(\mu)$ of the linear function defined by the left-hand side is given by the orthogonal projection $\Pi^{\mu}_{d}\left(\frac{\phi_{z,\epsilon}(y)}{f(y)}\right)$.
As a result the following equality holds for any $z\in \calZ$,
\[DMCD_{d,\epsilon}^{\mu}(z,z)=\left\|\Pi^{\mu}_{d}\left(\frac{\phi_{z,\epsilon}(y)}{f(y)}\right)\right\|_{{\mathscr L}^2(\mu)}^2.\]
Since $\frac{\phi_{z,\epsilon}(y)}{f(y)}\in {\mathscr L}^2(\mu)$ it follows that
\[\lim_{d\rightarrow \infty}DMCD_{d,\epsilon}^{\mu}(z,z) = \left\|\frac{\phi_{z,\epsilon}(y)}{f(y)}\right\|^2_{{\mathscr L}^2(\mu)}=\int_X\left(\frac{\phi_{z,\epsilon}(y)}{f(y)}\right)^2d\mu(y)=\int_X \frac{\phi_{z,\epsilon}(y)^2}{f(y)}d\lambda(y)\]
and we conclude that  
\[\lim_{\epsilon\rightarrow 0}\frac{\lim_{d\rightarrow \infty} DMCD_{d,\epsilon}^{\mu}(z,z)
}{\|\phi_{z,\epsilon}\|^2_{{\mathscr L}^2(\lambda)}}=\lim_{\epsilon\rightarrow 0}\frac{\int_X\frac{1}{f(y)}\phi_{z,\epsilon}(y)^2 d\lambda(y)}{\int_\calZ \phi_{z,\epsilon}(y)^2d\lambda(y)}.\]
If $x$ is any point interior to $X$, then there exists a $\delta_0>0$ such that $\mathscr B(x,\delta_0)\cap \calZ=\mathscr B(x,\delta_0)\cap X$. For any $\eta>0$ the continuity of $1/f(y)$ on $X$ implies that there exists $\delta_1<\delta_0$ such that $|1/f(y)-1/f(x)|<\eta$ whenever $|y-x|<\delta_1$ and $y\in \calZ$. Letting $h_x(y):=1/f(y)-1/f(x)$ for $y\in X$ we conclude that
\[ \left|\frac{\int_X\frac{1}{f(y)}\phi_{x,\epsilon}(y)^2 d\lambda(y)}{\int_\calZ \phi_{x,\epsilon}(y)^2d\lambda(y)}-\frac{1}{f(x)}\right|=\left|\frac{\int_X h_x(y)\phi_{x,\epsilon}(y)^2 d\lambda(y)}{\int_\calZ \phi_{x,\epsilon}(y)^2d\lambda(y)}\right|\leq \]
\[\left|\frac{\int_{X\cap \left\{y: |y-z|<\delta_1\right\}} h_x(y)\phi_{x,\epsilon}(y)^2 d\lambda(y)+\int_{X\cap \left\{y: |y-z|>\delta_1\right\}} h_x(y)\phi_{x,\epsilon}(y)^2 d\lambda(y)}{\int_\calZ \phi_{x,\epsilon}(y)^2d\lambda(y)}\right|\leq \]
\[\leq \frac{\int_{\calZ\cap \left\{y: |y-z|<\delta_1\right\}}|h_x(y)|\phi_{x,\epsilon}(y)^2 d\lambda(y)}{\int_\calZ \phi_{x,\epsilon}(y)^2d\lambda(y)}+\frac{\int_{X\cap \left\{y: |y-z|>\delta_1\right\}}\left|h_x(y)\right|\phi_{x,\epsilon}(y)^2 d\lambda(y)}{\int_\calZ \phi_{x,\epsilon}(y)^2d\lambda(y)}\]
\[\leq \eta + 2M \frac{\int_{\calZ\cap \left\{y: |y-z|>\delta_1\right\}}\phi_{x,\epsilon}(y)^2 d\lambda(y)}{\int_\calZ \phi_{x,\epsilon}(y)^2d\lambda(y)}\]
where $M$ denotes the maximum value of the continuous function $1/f(y)$ for $y$ in the compact set $X$. Since $|h_x(y)|\leq 2M$ and the mollifiers $\phi_{x,\epsilon}$ concentrate their energy around $x$ as $\epsilon\rightarrow 0$ (see property $(4)$ of Definition \ref{def: space} of the space with ${\mathscr L}^2$ mollifiers $\calZ$) the limit as $\epsilon\rightarrow 0$ of the last upper bound above converges to $\eta$. Since $\eta>0$ was arbitrary we conclude that 
\[\lim_{\epsilon\rightarrow 0}\left|\frac{\int_X\frac{1}{f(y)}\phi_{x,\epsilon}(y)^2 d\lambda(y)}{\int_\calZ \phi_{x,\epsilon}(y)^2d\lambda(y)}-\frac{1}{f(x)}\right|=0\]
proving the claimed equality for points interior to $X$.

If $z\not\in X$ then for any $d>0$ the following inequality holds,
\[DMCD_{d,\epsilon}^{\mu}(z,z)=\left\|\Pi^{\mu}_{d}\left(\frac{\phi_{z,\epsilon}(y)}{f(y)}\right)\right\|_{{\mathscr L}^2(\mu)}^2\leq \left\|\frac{\phi_{z,\epsilon}(y)}{f(y)}\right\|_{{\mathscr L}^2(\mu)}^2= \int_X\frac{1}{f(y)}\phi_{z,\epsilon}(y)^2 d\lambda(y)\]
we conclude that if $M$ denotes the maximum value of the continuous function $1/f(y)$ for $y$ in the compact set $X$ then 
\[\frac{DMCD_{d,\epsilon}^{\mu}(z,z)}{\|\phi_{z,\epsilon}(y)\|^2_{{\mathscr L}^2(\lambda)}}\leq \frac{\int_X\frac{1}{f(y)}\phi_{z,\epsilon}(y)^2 d\lambda(y)}{\|\phi_{z,\epsilon}(y)\|^2_{{\mathscr L}^2(\lambda)}}\leq M \frac{\int_X\phi_{z,\epsilon}(y)^2 d\lambda(y)}{\|\phi_{z,\epsilon}(y)\|^2_{{\mathscr L}^2(\lambda)}}\leq M \frac{\int_{\calZ\cap \left\{y: |y-z|\geq {\rm dist}(z,X)\right\}}\phi_{z,\epsilon}(y)^2 d\lambda(y)}{\|\phi_{z,\epsilon}(y)\|^2_{{\mathscr L}^2(\lambda)}}\]
where the last inequality holds because $z\not\in X$ and thus $X\subseteq \calZ\cap \left\{y: |y-z|\geq {\rm dist}(z,X)\right\}$.  The rightmost quantity goes to zero as $\epsilon\rightarrow 0$ because the mollifiers $\phi_{z,\epsilon}$ concentrate their energy around $z$ as $\epsilon\rightarrow 0$ (by property $(4)$ of the space with ${\mathscr L}^2$ mollifiers $\calZ$).\end{proof}

\section{Quantitative estimates for density recovery via MCD kernels}
\label{sec: Quant}

As shown in Theorem~\ref{Thm: densityrecovery} the density estimator MCD kernel converges pointwise to $1/f(x)$ at points $x$ interior to $X$. In this section we show that under Sobolev regularity assumptions on the density $f(x)$, the convergence is in fact uniform if $\epsilon$ and $d$ are coupled appropriately. More precisely, in the next two sections we derive quantitative estimates on the rate of convergence in two cases of much interest to applications:
\begin{enumerate}
    \item Recovery of densities on regular compact domains in Euclidean space using local support mollifiers (see Theorem~\ref{thm:rate-general}) and
    \item Recovery of densities via algebraic mollifiers on spheres (see Theorem~\ref{thm:rate-sphere}).    
\end{enumerate}
While the details of the estimations depend on the chosen space, both proofs share a fundamental structural similarity which we now emphasize.

For each $x\in\interior\X$ and $\varepsilon>0$, we have seen that if
\[
h_{x,\varepsilon}(y) := \frac{\phi_{x,\varepsilon}(y)}{f(y)}\in {\mathscr L}^2(\mu)
\]
then the Density estimator mollified CD polynomial is given by
\[
{\rm DMCD}^{\mu}_{d,\epsilon}(x,x) = \norm{\Pi_d h_{x,\varepsilon}}_{{\mathscr L}^2(\mu)}^2.
\]

The key observation, originally made in  \cite{Lasserre2023}, is that in the limit of infinite-dimensional approximation, the quantity
\[
\frac{\norm{h_{x,\varepsilon}}_{{\mathscr L}^2(\mu)}^2}{\|\phi_{x,\varepsilon}\|_{{\mathscr L}^2(\lambda)}^2}
= \frac{\displaystyle\int_{\X} \frac{\phi_{x,\varepsilon}(y)^2}{f(y)}\,d\mu(y)}{\displaystyle\int_{\X} \phi_{x,\varepsilon}(y)^2\,d\lambda(y)}
\]
converges to $1/f(x)$ as $\varepsilon\to0$. Thus our estimator of $1/f(x)$ is
\begin{equation}
	\label{eq:estimator}
	\widehat{g}_{d,\varepsilon}(x)
	:= \frac{p_{d,\varepsilon}^\mu(x)}{\|\phi_{x,\varepsilon}\|_{{\mathscr L}^2(\lambda)}^2}
	= \frac{\norm{\Pi_d h_{x,\varepsilon}}_{{\mathscr L}^2(\mu)}^2}{\|\phi_{x,\varepsilon}\|_{{\mathscr L}^2(\lambda)}^2},
	\qquad g(x):=\frac{1}{f(x)}.
\end{equation}

We decompose the error as
\begin{equation}
	\label{eq:error-decomposition}
	\left| \widehat{g}_{d,\varepsilon}(x) - g(x) \right|
	\le
	\underbrace{
		\frac{\left| \norm{\Pi_d h_{x,\varepsilon}}_{{\mathscr L}^2(\mu)}^2 - \norm{h_{x,\varepsilon}}_{{\mathscr L}^2(\mu)}^2 \right|}
		{\|\phi_{x,\varepsilon}\|_{{\mathscr L}^2(\lambda)}^2}
	}_{\text{projection error}}
	+
	\underbrace{
		\left|
		\frac{\norm{h_{x,\varepsilon}}_{{\mathscr L}^2(\mu)}^2}{\|\phi_{x,\varepsilon}\|_{{\mathscr L}^2(\lambda)}^2}
		- g(x)
		\right|}_{\text{approximation error}}.
\end{equation}

The projection error is due to finite-dimensional approximation ($V_d$ versus the full ${\mathscr L}^2(\mu)$), while the approximation error is due to mollification ($\varepsilon>0$ versus $\varepsilon\to0$). Our quantitative estimates bound each of these sources of error independently.

\subsection{Density recovery in Euclidean space using mollifiers with local support}
\label{sec: QuantRn}

We now turn to quantitative estimates on density recovery in Euclidean space $\RR^n$ using mollifiers with local support as in Example~\ref{example: Rn}. More precisely, our Theorems will hold under the following set of mild assumptions:

\begin{assumption}\label{assumpt:Rn}
The following statements hold:
\begin{enumerate}
    \item The chosen mollifiers are given by
    \[\phi_{z,\epsilon}(y)=\frac{1}{\epsilon^n}\phi\left(\frac{z-y}{\epsilon}\right)\]
    where $\phi:\R^n\rightarrow \R$ is a nonnegative, $C^{\infty}$ function with $\int_{\R^n}\phi(y)d\lambda(y)=1$ and $\;\supp  \phi\subseteq B(0,1)$. These mollifiers have local support in the sense that $\forall z\in \RR^n,\;\forall \epsilon\in \calE,\; \supp\phi_{z,\epsilon}\subseteq \mathscr B(z,\epsilon).$
    \item The chosen mollifiers $\phi_{z,\epsilon}$ are symmetric around $z$ in the sense that if $\mathscr B(z,\epsilon)\subseteq X$ then
    \[0=\int_X \phi_{z,\epsilon}^2(y) \eta(y)d\lambda(y)\]
    holds for every linear form $\eta(y)=\sum c_i (y_i-z_i)$ vanishing at $z$. This assumption holds whenever $\phi(-y)=\phi(y)$ for every $y\in \R^n$.
\item The compact set $X$ which supports the measure $\mu$ is a regular Lipschitz domain. This means that it is equal to the closure of its interior and that its boundary is given locally by the graph of a Lipschitz function. 
\item The density is bounded and strictly positive. More precisely we assume $0<m\le f\le M < \infty$ on $\X$ for given constants $m$ and $M$.
\end{enumerate}
\end{assumption}

Parts $(1)$ and $(2)$ of the previous assumption set hold for the mollifiers in Example~\ref{example: Rn} and do so for any set of mollifiers for which the basic function $\phi$ is radially symmetric. Part $(3)$ holds quite generally, for instance it holds for any compact set with $\mathscr C^1$ boundary.

\subsubsection{Approximation error}

We first control the approximation error, which depends only on the local regularity of the density.

\begin{lemma}[Approximation error]
	\label{lem:approx-error}
	Assume $f \in \mathscr C^2(\X)$. If Assumption~\ref{assumpt:Rn} holds, then for each compact set
	$K\subset\interior\X$ there exist constants $C>0$ and
	$\varepsilon_0>0$ such that for all $x\in K$ and all
	$0<\varepsilon<\varepsilon_0$,
	\[
	\left|
	\frac{\|h_{x,\varepsilon}\|_{{\mathscr L}^2(\mu)}^2}{\|\phi_{x,\varepsilon}\|_{{\mathscr L}^2(\lambda)}^2}
	- \frac{1}{f(x)} \right|
	\le C\,\varepsilon^2.
	\]
	In particular, the convergence is uniform on $K$.
\end{lemma}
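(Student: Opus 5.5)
Since $K\subset\interior\X$ is compact, set $\varepsilon_0:=\tfrac12\operatorname{dist}(K,\partial\X)>0$. Then for every $x\in K$ and $0<\varepsilon<\varepsilon_0$ we have $\mathscr B(x,\varepsilon)\subseteq\X$, so by the local support in Assumption~\ref{assumpt:Rn}(1) every integral over $\X$ against $\phi_{x,\varepsilon}^2$ is an integral over $\mathscr B(x,\varepsilon)$. With $g=1/f$, the quantity to estimate is
\[
\frac{\|h_{x,\varepsilon}\|_{{\mathscr L}^2(\mu)}^2}{\|\phi_{x,\varepsilon}\|_{{\mathscr L}^2(\lambda)}^2}-g(x)
=\frac{\int_{\mathscr B(x,\varepsilon)}\bigl(g(y)-g(x)\bigr)\phi_{x,\varepsilon}(y)^2\,d\lambda(y)}{\int_{\mathscr B(x,\varepsilon)}\phi_{x,\varepsilon}(y)^2\,d\lambda(y)} .
\]
This uses $\int_\X \phi^2_{x,\varepsilon}f^{-2}\,d\mu=\int_\X\phi_{x,\varepsilon}^2 g\,d\lambda$, as in the proof of Theorem~\ref{Thm: densityrecovery}. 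The task is therefore to show that a normalized $\phi_{x,\varepsilon}^2$-weighted average of $g(y)-g(x)$ over a ball of radius $\varepsilon$ is $O(\varepsilon^2)$.

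First, $g$ is $\mathscr C^2$ on $\X$, because $f\in\mathscr C^2(\X)$ and $f\ge m>0$ by Assumption~\ref{assumpt:Rn}(4). Let $K':=\{y:\operatorname{dist}(y,K)\le\varepsilon_0\}$, a compact subset of $\interior\X$. Then
\[
L:=\sup_{K'}\|D^2g\|<\infty,
\]
a bound depending only on $K$, $m$ and the $\mathscr C^2$ norm of $f$ on $K'$. For $y\in\mathscr B(x,\varepsilon)$ the whole segment $[x,y]$ lies in $K'$, so Taylor's theorem with remainder gives
\[
g(y)-g(x)=\nabla g(x)\cdot(y-x)+R_x(y),\qquad |R_x(y)|\le \tfrac{L}{2}\|y-x\|^2\le \tfrac L2\varepsilon^2 .
\]

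Substitute this expansion into the numerator. The linear term $\eta(y)=\nabla g(x)\cdot(y-x)$ is a linear form vanishing at $x$, and $\mathscr B(x,\varepsilon)\subseteq\X$, so by the symmetry Assumption~\ref{assumpt:Rn}(2) its integral against $\phi_{x,\varepsilon}^2$ is exactly zero. The remainder term is bounded using $\phi_{x,\varepsilon}^2\ge0$:
\[
\left|\int R_x\,\phi_{x,\varepsilon}^2\,d\lambda\right|\le \tfrac L2\varepsilon^2\int\phi_{x,\varepsilon}^2\,d\lambda .
\]
Dividing by the denominator $\int\phi^2_{x,\varepsilon}\,d\lambda$ yields the bound $C\varepsilon^2$ with $C=L/2$, uniformly in $x\in K$ and $\varepsilon<\varepsilon_0$. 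The denominator is strictly positive, since $\phi$ is a nonnegative probability density and hence not a.e.\ zero. Scaling plays no role here: the ratio is invariant under multiplying $\phi_{x,\varepsilon}$ by $\varepsilon^{-n}$.

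The point needing most care is that the first-order term vanishes exactly. This is where Assumption~\ref{assumpt:Rn}(2) enters, and it requires $\mathscr B(x,\varepsilon)\subseteq\X$; that is why $\varepsilon_0$ must be tied to $\operatorname{dist}(K,\partial \X)$. Without the symmetry we would only obtain $O(\varepsilon)$ from the Lipschitz bound on $g$. The other point is making the $\mathscr C^2$ constant uniform, which we handle by working on the fixed compact neighbourhood $K'\subset\interior\X$ rather than near $\partial\X$.
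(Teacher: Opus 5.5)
Your proof is correct and follows essentially the same route as the paper. Both restrict to $\varepsilon$ small enough that $\mathscr B(x,\varepsilon)\subseteq \X$, Taylor-expand $g=1/f$ at $x$, remove the linear term using the symmetry in Assumption~\ref{assumpt:Rn}(2), and bound the second-order part by a uniform constant times $\varepsilon^2$. The only difference is cosmetic: you fold the Hessian term and the remainder into a single Lagrange-type remainder controlled by $\sup_{K'}\|D^2 g\|$ on an explicit compact neighbourhood $K'\subset\interior\X$, whereas the paper bounds these two terms separately. This makes the uniformity in $x\in K$ slightly more transparent in your version.
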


\begin{proof}
	By definition,
	\[
	\frac{\|h_{x,\varepsilon}\|_{{\mathscr L}^2(\mu)}^2}{\|\phi_{x,\varepsilon}\|_{{\mathscr L}^2(\lambda)}^2}
	= \frac{\displaystyle\int_{\X} \phi_{x,\varepsilon}(y)^2 g(y)\,d\lambda(y)}
	{\displaystyle\int_{\X} \phi_{x,\varepsilon}(y)^2\,d\lambda(y)}.
	\]
	
	Fix a compact set $K\subset\interior\X$. Because $g\in \mathscr C^2(\X)$, for
	each $x\in K$ there is a quadratic Taylor expansion
	\[
	g(y)
	= g(x) + (y-x)^T \nabla g(x) 
	+ \frac12(y-x)^T \nabla^2 g(x)(y-x) + r_x(y),
	\]
	where $\nabla g(x)$ is the gradient and $\nabla^2 g(x)$ is the Hessian of $g$ at $x$. For $y$ close to $x$,
	\[
	|r_x(y)| \le C_1\|y-x\|^2,
	\]
	with a constant $C_1$ independent of $x\in K$ (by compactness).
	
	Since $K\subset\interior\X$ and $\phi_{x,\varepsilon}$ has compact support,
	we can choose $\varepsilon_0>0$ such that, for all $x\in K$ and
	$0<\varepsilon<\varepsilon_0$, the support of $y\mapsto\phi_{x,\varepsilon}(y)$
	is contained in a ball $\mathscr B(x,\varepsilon)\subset\X$ where the Taylor
	estimate above holds. Substituting the Taylor expansion into the ratio gives
    \begin{align*}
    \frac{\|h_{x,\varepsilon}\|_{{\mathscr L}^2(\mu)}^2}{\|\phi_{x,\varepsilon}\|_{{\mathscr L}^2(\lambda)}^2}
    &= g(x)
    + \frac{\displaystyle\int_\X \phi_{x,\varepsilon}(y)^2
            \bigl[(y-x)^T \nabla g(x)\bigr]\,d\lambda(y)}
           {\displaystyle\int_\X \phi_{x,\varepsilon}(y)^2\,d\lambda(y)} \\
    &\quad + \frac12 \frac{\displaystyle\int_\X \phi_{x,\varepsilon}(y)^2
            \bigl[(y-x)^T \nabla^2 g(x)(y-x) + e_x(y)\bigr]\,d\lambda(y)}
           {\displaystyle\int_\X \phi_{x,\varepsilon}(y)^2\,d\lambda(y)}.
    \end{align*}
	
	The first correction term vanishes by symmetry (part $(2)$ of Assumption~\ref{assumpt:Rn}), yielding
	\[
	\int_\X \phi_{x,\varepsilon}(y)^2 \bigl[(y-x)^T \nabla g(x)\bigr]\,d\lambda(y) = 0.
	\]
	
	For the quadratic term, let
	\[
	\Lambda := \sup_{x\in K, z} \left|\frac{z^\top  \nabla^2 g(x)z}{z^\top z}\right| \in (0,\infty)
	\]
	so that
	\[
	\bigl|(y-x)^T \nabla^2 g(x)(y-x)\bigr|
	\le \Lambda \|y-x\|^2.
	\]
	Using also $|r_x(y)|\le C_1\|y-x\|^2$, we get the uniform bound
	\[
	\bigl|(y-x)^T \nabla^2 g(x)(y-x) + r_x(y)\bigr|
	\le (\Lambda+C_1)\,\|y-x\|^2.
	\]
	By part $(1)$ of Assumption~\ref{assumpt:Rn}, $\phi_{x,\varepsilon}$ is supported in $\mathscr B(x,\varepsilon)$ so we have
	$\|y-x\|\le\varepsilon$ on the support, and therefore
	\[
	\left|
	\frac{\displaystyle\int_\X \phi_{x,\varepsilon}(y)^2
		\bigl[(y-x)^T \nabla^2 g(x)(y-x) + r_x(y)\bigr]\,d\lambda(y)}
	{\displaystyle\int_\X \phi_{x,\varepsilon}(y)^2\,d\lambda(y)}
	\right|
	\le (\Lambda+C_1)\,\varepsilon^2.
	\]	
	Combining the above estimates yields
	\[
	\left|
	\frac{\|h_{x,\varepsilon}\|_{{\mathscr L}^2(\mu)}^2}{\|\phi_{x,\varepsilon}\|_{{\mathscr L}^2(\lambda)}^2}
	- g(x)
	\right|
	\le C\,\varepsilon^2
	\]
	for all $x\in K$ and $0<\varepsilon<\varepsilon_0$, with
	$C:=\frac12(\Lambda+C_1)$ independent of $x$ and $\varepsilon$. This proves the claim
	and shows that the convergence is uniform on $K$.
\end{proof}

\begin{remark}
	The previous proof shows that the approximation error is controlled by the variance
	\[
	\frac{\displaystyle\int_\X \phi_{x,\varepsilon}(y)^2 \|y-x\|^2\,d\lambda(y)}
	{\displaystyle\int_\X \phi_{x,\varepsilon}(y)^2\,d\lambda(y)}.
	\]
	For standard scaled radial mollifiers this quantity behaves like
	$c\,\varepsilon^2$, so the rate $\mathcal O(\varepsilon^2)$ cannot be improved uniformly over all $\mathscr C^2$ densities $f$.
	Faster rates are in principle possible only if one allows signed, higher-order
	kernels with additional vanishing-moment conditions, which are no longer
	positive mollifiers of the usual type.
\end{remark}

\subsubsection{Projection error and Sobolev approximation}

The projection error involves the approximation of $h_{x,\varepsilon}$ by its
orthogonal projection onto $V_d$. We now quantify this error using polynomial
approximation in Sobolev spaces, following \cite{LiXu2014}.

We first recall the relevant Sobolev estimate on the ball. Let
${\mathscr B}\subset\R^n$ denote a closed ball, and for an
integer $k\ge0$ let $\mathscr H^k(\mathscr B)$ be the Sobolev space of functions
$h:{\mathscr B}\to\R$ whose weak derivatives $\partial^\alpha h$ belong
to ${\mathscr L}^2({\mathscr B})$ for all multi-indices $\alpha$ with $|\alpha|\le k$.
The Sobolev norm is
\[
\|h\|_{\mathscr H^k({\mathscr B})}
:= \sqrt{\sum_{|\alpha|\le k} \|\partial^\alpha h\|_{{\mathscr L}^2({\mathscr B})}^2}.
\]

\begin{theorem}\label{thm:LiXu}~\cite[Theorem~1.1]{LiXu2014}
	For each integer $k\ge1$ there exists a constant $c_k>0$ such that for every
	$h\in \mathscr H^k({\mathscr B})$ and every integer $d\ge1$ there exists a polynomial
	$p_d$ of degree at most $d$ satisfying
	\[
	\|h-p_d\|_{{\mathscr L}^2({\mathscr B})}
	\le c_k\,d^{-k}\,\|h\|_{\mathscr H^k({\mathscr B})}.
	\]
	In particular, the best ${\mathscr L}^2$ approximation error by degree-$d$ polynomials is
	bounded by the right-hand side.
\end{theorem}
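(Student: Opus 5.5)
The plan is to reduce to a tensor-product problem on a cube, where one-dimensional Legendre expansions do the work. The reduction uses a Sobolev extension, and the cube estimate uses the Sturm--Liouville structure of Legendre polynomials. After an affine change of variables we may assume $\mathscr B$ is the closed unit ball, and we place it inside the open cube $Q:=(-2,2)^n$, rescaled to $(-1,1)^n$ at the end. The constants below depend only on $k$ and $n$.

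\emph{Step 1 (extension).} Invoke the Calder\'on--Stein extension theorem for Lipschitz domains. It gives a bounded linear operator $E:\mathscr H^k(\mathscr B)\to \mathscr H^k(\R^n)$ with $Eh=h$ on $\mathscr B$. Multiply by a fixed smooth cutoff $\chi$ that equals $1$ on $\mathscr B$ and is supported in the ball of radius $3/2$. The function $\tilde h:=\chi Eh$ then has compact support inside $Q$ and satisfies $\|\tilde h\|_{\mathscr H^k(Q)}\le C\|h\|_{\mathscr H^k(\mathscr B)}$. Since $\|h-p\|_{{\mathscr L}^2(\mathscr B)}\le \|\tilde h-p\|_{{\mathscr L}^2(Q)}$, it suffices to approximate $\tilde h$ on the cube. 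The compact support is exactly what kills all boundary terms later. I expect this step to be the main technical obstacle, since it relies on the nontrivial extension theorem, but here it can be quoted.

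\emph{Step 2 (one variable).} On $(-1,1)$, let $S_m$ be the ${\mathscr L}^2$-orthogonal projection onto polynomials of degree at most $m$, i.e.\ the truncated Legendre series. Let $D u := -\bigl((1-t^2)u'\bigr)'$, so that $DP_j=j(j+1)P_j$. Suppose $u$ is smooth with compact support in $(-1,1)$, or lies in $\mathscr H^k$ with the same support property. Integrating by parts with no boundary terms, the Legendre coefficients satisfy $\sum_j (j(j+1))^{k}|c_j|^2 = \langle D^{k/2}u,D^{k/2}u\rangle$ for even $k$. For odd $k$ one uses $\langle D^{(k-1)/2}u, D\,D^{(k-1)/2}u\rangle$ instead, which equals $\int(1-t^2)\,|(D^{(k-1)/2}u)'|^2$. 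In both cases the right-hand side is at most $C\|u\|^2_{\mathscr H^k}$, because $D$ is a second-order operator with bounded polynomial coefficients. Hence
\[
\|u-S_mu\|_{{\mathscr L}^2}^2=\sum_{j>m}|c_j|^2\le (m(m+1))^{-k}\sum_{j>m}(j(j+1))^k|c_j|^2\le C\,m^{-2k}\|u\|_{\mathscr H^k}^2 .
\]

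\emph{Step 3 (tensorization).} Let $S^{(i)}_m$ denote $S_m$ acting in the variable $y_i$. These operators commute and are ${\mathscr L}^2(Q)$-contractions. Put $m:=\lfloor d/n\rfloor$ and $P:=S^{(1)}_m\cdots S^{(n)}_m$, so that $P\tilde h$ has total degree at most $d$. Telescoping gives
\[
\tilde h-P\tilde h=\sum_{i=1}^n S^{(1)}_m\cdots S^{(i-1)}_m\,(I-S^{(i)}_m)\tilde h ,
\]
so $\|\tilde h-P\tilde h\|\le\sum_i\|(I-S^{(i)}_m)\tilde h\|$. Apply Step 2 fibrewise in $y_i$ and integrate over the remaining variables. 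Each term is then at most $C m^{-k}\|\tilde h\|_{\mathscr H^k(Q)}$, using only derivatives in the variable $y_i$ of order at most $k$.

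Combining the three steps, and using $m^{-k}\le C_n d^{-k}$ for $d\ge n$, gives the claimed bound for $p_d:=P\tilde h$ restricted to $\mathscr B$. The finitely many $d<n$ are handled by taking $p_d=0$ and enlarging $c_k$. Finally, the best ${\mathscr L}^2$ approximation is at least as good as $p_d$, which gives the ``in particular'' clause.
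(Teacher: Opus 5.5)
Your proof is correct. It takes a genuinely different route from the one behind this statement: the paper gives no argument of its own and simply imports \cite[Theorem~1.1]{LiXu2014}.

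Li and Xu work intrinsically on the ball, building approximants from orthogonal polynomial expansions on the ball, including polynomials orthogonal with respect to Sobolev inner products. In exchange they obtain much more than is needed here: simultaneous approximation of derivatives, $\|h-p_d\|_{W^r_p}\le c\,d^{-(s-r)}\|h\|_{W^s_p}$ for $0\le r\le s$, in $L^p$-based Sobolev norms. The paper only uses the case $p=2$, $r=0$.

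Your route is elementary and delivers exactly that special case. It combines extension and cutoff with tensor-product Legendre truncation, controlled by the Sturm--Liouville relation $DP_j=j(j+1)P_j$ and a telescoping sum over coordinates. Two steps deserve a line of justification:
\begin{itemize}
\item For odd $k$, the expression $\langle D^{(k-1)/2}u, D\,D^{(k-1)/2}u\rangle$ must be read as the quadratic form $\int(1-t^2)|v'|^2$ with $v=D^{(k-1)/2}u$. The inequality you need, $\sum_j j(j+1)|c_j(v)|^2\le\int(1-t^2)|v'|^2$, is Bessel's inequality for the orthogonal system $\sqrt{1-t^2}\,P_j'$.
\item The fibrewise application of Step 2 is cleanest via density of $C_c^\infty(Q)$ in $\mathscr H^k$. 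This density is available precisely because of your cutoff.
\end{itemize}

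What your approach buys is generality. Steps 2 and 3 only need an $\mathscr H^k$ function with compact support in a cube, so the argument works on any bounded domain with an $\mathscr H^k$ extension operator. In the paper's own application (Lemma~\ref{lem:proj-error}), $h_{x,\varepsilon}$ is already supported in $\mathscr B(x,\varepsilon)\subset\interior X$. Its extension by zero can therefore be fed directly into your Steps 2--3. That would make both the Sobolev extension of $g$ to a ball and the appeal to Li--Xu unnecessary there.
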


\begin{remark}
	The statement is a specialization of~\cite[Theorem~1.1]{LiXu2014} with
	$p=2$, $k=0$ in their notation and our $k$ playing the role of their smoothness
	index $s$. Their parameter $n$ corresponds to the polynomial degree $d$.
\end{remark}

We now translate this into a bound on the projection error which will depend on the Sobolev regularity of the density,

\begin{assumption}
	\label{ass:sobolev}
	Assume that $f$ and $1/f$ belong to $\mathscr H^s(\X)$ for some $s>n/2$.
\end{assumption}

\begin{lemma}[Projection error]\label{lem:proj-error}
	Under Assumption~\ref{assumpt:Rn} and Assumption~\ref{ass:sobolev}, fix an integer $k$ with $1\le k\le s$ and $k>n/2$. For every compact set $K\subseteq \interior X$ there exists positive constants $\varepsilon_0, C$ such that the following equality holds for every $x\in K$, $\epsilon\leq \epsilon_0$ and positive integer $d$,
    \begin{equation}\label{eq:proj-L2mu}
		\big\| h_{x,\varepsilon} - \Pi_d h_{x,\varepsilon}\big\|_{{\mathscr L}^2(\mu)}^2
		\;\le\;
		C(\phi,f,k,n)\,\varepsilon^{-n}(\varepsilon d)^{-2k}.
	\end{equation}
	Consequently, the difference of squared norms satisfies
	\begin{equation}\label{eq:proj-sqdiff}
		0\le
		\big\|h_{x,\varepsilon}\big\|_{{\mathscr L}^2(\mu)}^2
		- \big\|\Pi_d h_{x,\varepsilon}\big\|_{{\mathscr L}^2(\mu)}^2
		= \big\|h_{x,\varepsilon} - \Pi_d h_{x,\varepsilon}\big\|_{{\mathscr L}^2(\mu)}^2
		\;\le\;
		C(\phi,f,k,n)\,\varepsilon^{-n}(\varepsilon d)^{-2k}.
	\end{equation}

\end{lemma}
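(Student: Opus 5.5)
Since $\Pi_d$ is the $\mathscr L^2(\mu)$-orthogonal projection onto $V_d$, the identity in \eqref{eq:proj-sqdiff} is Pythagoras, $\|h\|^2=\|\Pi_d h\|^2+\|h-\Pi_d h\|^2$. It therefore suffices to prove \eqref{eq:proj-L2mu}. By the best-approximation property, for any polynomial $p$ of degree at most $d$,
\[
\|h_{x,\varepsilon}-\Pi_d h_{x,\varepsilon}\|^2_{\mathscr L^2(\mu)}\le \|h_{x,\varepsilon}-p\|^2_{\mathscr L^2(\mu)}\le M\,\|h_{x,\varepsilon}-p\|^2_{\mathscr L^2(X,\lambda)},
\]
using $f\le M$. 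Fix a closed ball $\mathscr B\supseteq X$. The plan is to extend $h_{x,\varepsilon}$ to an $\mathscr H^k(\mathscr B)$ function, apply Theorem~\ref{thm:LiXu}, and track the $\varepsilon$-dependence of the Sobolev norm.

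\textbf{Extension.} Choose $\varepsilon_0$ so that $\mathscr B(x,2\varepsilon_0)\subseteq \interior X$ for all $x\in K$. Then $\phi_{x,\varepsilon}$ is supported in $\mathscr B(x,\varepsilon)\subset\interior X$. Since $X$ is a Lipschitz domain (Assumption~\ref{assumpt:Rn}(3)), Stein's extension operator gives $\tilde g\in\mathscr H^s(\R^n)$ extending $g=1/f$, with $\|\tilde g\|_{\mathscr H^k}\le C\|g\|_{\mathscr H^k(X)}$. Set $\tilde h=\phi_{x,\varepsilon}\tilde g$. This is compactly supported in $\mathscr B(x,\varepsilon)$ and coincides with $h_{x,\varepsilon}$ on $X$. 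Consequently $\|h_{x,\varepsilon}-p\|_{\mathscr L^2(X)}\le\|\tilde h-p\|_{\mathscr L^2(\mathscr B)}\le c_k d^{-k}\|\tilde h\|_{\mathscr H^k(\mathscr B)}$. In fact no extension is strictly needed, because the support lies inside $X$; we simply take $\tilde h:=h_{x,\varepsilon}$ extended by zero.

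\textbf{Sobolev norm of the product.} This is the main step. By Leibniz, $\partial^\alpha(\phi_{x,\varepsilon}g)=\sum_{\beta\le\alpha}\binom{\alpha}{\beta}\partial^\beta\phi_{x,\varepsilon}\,\partial^{\alpha-\beta}g$. Scaling gives $\|\partial^\beta\phi_{x,\varepsilon}\|_{\mathscr L^\infty}=\varepsilon^{-n-|\beta|}\|\partial^\beta\phi\|_\infty$ and $\|\partial^\beta\phi_{x,\varepsilon}\|_{\mathscr L^2}=\varepsilon^{-n/2-|\beta|}\|\partial^\beta\phi\|_{\mathscr L^2}$. The $\beta=\alpha$ term is controlled by $\|g\|_\infty\le 1/m$ times $\varepsilon^{-n/2-|\alpha|}$. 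For the remaining terms, a naive $\mathscr L^\infty\times\mathscr L^2$ estimate gives $\varepsilon^{-n-|\beta|}\|\partial^{\alpha-\beta}g\|_{\mathscr L^2(\mathscr B(x,\varepsilon))}$, which loses powers of $\varepsilon$. The fix is to note that $s>n/2$ and $k\le s$: either assume $g\in C^k$ near $K$, or, more honestly, estimate $\|\partial^{\alpha-\beta}g\|_{\mathscr L^2(\mathscr B(x,\varepsilon))}$ by Hölder together with Sobolev embedding into $\mathscr L^{q}$ for the appropriate $q$. This yields a bound of the form $\varepsilon^{-n/2-|\beta|}$ up to constants depending on $\|g\|_{\mathscr H^s}$ and $\phi$.

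I expect the $\varepsilon$-bookkeeping for the mixed terms to be the main obstacle. If the sharp local estimate is troublesome, I would accept the cruder bound $\|\partial^{\alpha-\beta}g\|_{\mathscr L^2}\le\|g\|_{\mathscr H^k}$ paired with $\|\partial^\beta\phi_{x,\varepsilon}\|_\infty$. Since every term carries at most $\varepsilon^{-|\beta|}$ with $|\beta|\le k$, the leading term is still $\varepsilon^{-n/2-k}$ once the $\varepsilon^{-n/2}$ from the $\mathscr L^2$ norm of $\phi_{x,\varepsilon}$ is used appropriately. In either case, summing over $|\alpha|\le k$ with $\varepsilon\le\varepsilon_0\le1$ gives $\|\tilde h\|_{\mathscr H^k}^2\le C(\phi,f,k,n)\,\varepsilon^{-n-2k}$, uniformly in $x\in K$.

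\textbf{Conclusion.} Combining these bounds gives
\[
\|h_{x,\varepsilon}-\Pi_d h_{x,\varepsilon}\|^2_{\mathscr L^2(\mu)}\le M c_k^2 d^{-2k}\,C\varepsilon^{-n-2k}=C'\varepsilon^{-n}(\varepsilon d)^{-2k},
\]
which is \eqref{eq:proj-L2mu}.
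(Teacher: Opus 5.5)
Your outline matches the paper's proof: Pythagoras for \eqref{eq:proj-sqdiff}, then best approximation together with $f\le M$, then Theorem~\ref{thm:LiXu} on a ball $\mathscr B\supseteq X$, then the scaling $\|\partial^\beta\phi_{x,\varepsilon}\|_{\mathscr L^2}=\varepsilon^{-n/2-|\beta|}\|\partial^\beta\phi\|_{\mathscr L^2}$. Your remark that no Sobolev extension of $g$ is needed is also correct, since $h_{x,\varepsilon}$ is compactly supported in $\interior X$; the paper extends anyway. The one place you diverge is the bound $\|\phi_{x,\varepsilon}g\|_{\mathscr H^k}\lesssim\varepsilon^{-n/2-k}$. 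The paper gets it in one line from the Banach algebra property of $\mathscr H^k$ for $k>n/2$:
$\|\phi_{x,\varepsilon}\tilde g\|_{\mathscr H^k}\le C'\|\phi_{x,\varepsilon}\|_{\mathscr H^k}\|\tilde g\|_{\mathscr H^k}$.
You instead redo it by Leibniz, and your bookkeeping there is not right as written.

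Your fallback fails. Putting $\partial^\beta\phi_{x,\varepsilon}$ in $\mathscr L^\infty$ costs $\varepsilon^{-n-|\beta|}$, and pairing it with $\|\partial^{\alpha-\beta}g\|_{\mathscr L^2}\le\|g\|_{\mathscr H^k}$ recovers no power of $\varepsilon$. For $|\alpha|=k$, $|\beta|=k-1$ this gives $\varepsilon^{-n-k+1}$, which is worse than $\varepsilon^{-n/2-k}$ whenever $n\ge 3$. No ``$\varepsilon^{-n/2}$ from the $\mathscr L^2$ norm'' is left once you have taken the sup norm.

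The H\"older--Sobolev route does work, but not with the exponent you state. Write $j=|\alpha-\beta|\ge 1$.
\begin{itemize}
\item You get $\|\partial^{\alpha-\beta}g\|_{\mathscr L^2(\mathscr B(x,\varepsilon))}\lesssim\varepsilon^{n/2}$ only if $\partial^{\alpha-\beta}g\in\mathscr L^\infty$, i.e.\ $k-j>n/2$.
\item Otherwise $\partial^{\alpha-\beta}g\in\mathscr H^{k-j}\subset\mathscr L^q$ with $1/q=1/2-(k-j)/n$. H\"older on the ball then gains only $\varepsilon^{k-j}$, so the term is $O(\varepsilon^{-n-|\beta|+k-j})=O(\varepsilon^{-n-|\alpha|+k})$, which is at worst $O(\varepsilon^{-n})$. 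For example, with $n=3$, $s=k=2$, $|\beta|=j=1$ you get $\varepsilon^{-3}$, not $\varepsilon^{-5/2}$.
\item In the borderline case $k-j=n/2$, take any large finite $q$; the slack from $|\beta|\le k-1$ absorbs the loss.
\end{itemize}
The conclusion survives only because $k>n/2$ gives $\varepsilon^{-n}\le\varepsilon^{-n/2-k}$ for $\varepsilon\le 1$. This is exactly where the hypothesis $k>n/2$ enters. It is the Moser-type estimate $\|uv\|_{\mathscr H^k}\lesssim\|u\|_{\mathscr L^\infty}\|v\|_{\mathscr H^k}+\|u\|_{\mathscr H^k}\|v\|_{\mathscr L^\infty}$ in disguise, which underlies the algebra property the paper cites.

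Your other option, ``assume $g\in C^k$ near $K$'', adds a hypothesis the lemma does not have. With the corrected exponents above, or simply by citing the algebra property as the paper does, your argument is complete.
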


\begin{proof}
	\emph{Step 1: Localization and extension to a ball.}
	Since $\X$ is bounded, there exists a closed ball ${\mathscr B}\subset\R^n$
	such that $\X\subset{\mathscr B}$. since $X$ is a Lipschitz Domain, a Sobolev 
	extension Theorem~\cite[Theorem 12.15]{Leoni2009} guarantees the existence of an extension $\tilde{g
}\in \mathscr H^s({\mathscr B})$ which agrees with $g$ on $X$.

    Because $K$ is compact there exists a positive real number $\varepsilon_0$ such that 
    $\mathscr B(x,\varepsilon_0)
	\subset\interior X\subset{\mathscr B}$ for every $x\in K$. If
	$0<\varepsilon<\varepsilon_0$ then the support of
	$y\mapsto\phi_{x,\varepsilon}(y)$ is contained in the ball $\mathscr B(x,\varepsilon)\subseteq \interior X$ and therefore $\phi_{x,\epsilon}$ or more precisely its extension by zero lies in $ H^s({\mathscr B})$.
	
	\medskip\noindent
	\emph{Step 2: Sobolev regularity and scaling of the Riesz representer.} Since $k>\frac{n}{2}$ the spaces $H^k({\mathscr B})$ are Banach algebras so they are closed under product
	\[
	h_{x,\varepsilon}(\cdot) = \phi_{x,\varepsilon}(\cdot)\,\tilde{g}(\cdot)
	\in \mathscr H^k({\mathscr B}),
	\]
 and there exists a constant $C'(k,n)$ such that $\|h_{x,\varepsilon}\|_{\mathscr H^k({\mathscr B})}\leq C' \|\phi_{x,\varepsilon}\|_{\mathscr H^k({\mathscr B})}\|g\|_{\mathscr H^k({\mathscr B})}$.

	We now estimate the Sobolev norm of $\phi_{x,\varepsilon}$.
	For any multi-index $\alpha$ with $|\alpha|\le k$, the chain rule gives
	\[
	\partial^\alpha \phi_{x,\varepsilon}(y)
	= \varepsilon^{-n-|\alpha|}\,
	(\partial^\alpha \phi)\!\left(\frac{x-y}{\varepsilon}\right).
	\]

	Hence
	\[
	\|\partial^\alpha \phi_{x,\varepsilon}\|_{{\mathscr L}^2({\mathscr B})}^2
	\le \varepsilon^{-2n-2|\alpha|}
	\int_{\R^n}\left|\partial^\alpha\phi\!\left(\frac{x-y}{\varepsilon}\right)\right|^2\,dy
	= \varepsilon^{-2n-2|\alpha|}\,\varepsilon^n
	\int_{\R^n}|\partial^\alpha\phi(z)|^2\,dz
	= C_\alpha\,\varepsilon^{-n-2|\alpha|},
	\]
	with $C_\alpha:=\|\partial^\alpha\varphi\|_{{\mathscr L}^2(\R^n)}^2$ independent of
	$x$ and $\varepsilon$. Summing over $|\alpha|\le k$ yields
	\[
	\|\phi_{x,\varepsilon}\|_{\mathscr H^k({\mathscr B})}^2
	= \sum_{|\alpha|\le k}\|\partial^\alpha \phi_{x,\varepsilon}\|_{{\mathscr L}^2({\mathscr B})}^2
	\le C_1(\phi,k,n)\,\varepsilon^{-n-2k},
	\]
	for some constant $C_1$ and therefore
	\[
	\|\phi_{x,\varepsilon}\|_{\mathscr H^k({\mathscr B})}
	\le C_1(\phi,k,n)^{1/2}\,\varepsilon^{-n/2-k}.
	\]	
	Combining this with the product estimate above yields,
	\begin{equation}\label{eq:h-sobolev-norm}
		\|h_{x,\varepsilon}\|_{\mathscr H^k({\mathscr B})}
		\;\le\;
		C_2(\phi,f,k,n)\,\varepsilon^{-n/2-k},
	\end{equation}
	with $C_2$ independent of $x\in K$ and $\varepsilon\leq \varepsilon_0$.
	
	\medskip\noindent
	\emph{Step 3: Polynomial approximation on the ball.}
	By Theorem~\ref{thm:LiXu}, applied on ${\mathscr B}$, there exists a polynomial
	$p_d$ of degree at most $d$ such that
	\[
	\|h_{x,\varepsilon} - p_d\|_{{\mathscr L}^2({\mathscr B})}
	\le c_k\,d^{-k}\,\|h_{x,\varepsilon}\|_{\mathscr H^k({\mathscr B})}.
	\]
	Using \eqref{eq:h-sobolev-norm}, we obtain
	\begin{equation}\label{eq:hd-pd-L2-B}
		\|h_{x,\varepsilon} - p_d\|_{{\mathscr L}^2({\mathscr B})}
		\le C_3(\phi,f,k,n)\,d^{-k}\,\varepsilon^{-n/2-k},
	\end{equation}
	for some constant $C_3$ independent of $x\in K$, $\varepsilon\leq \varepsilon_0$ and $d$.
	
	\medskip\noindent
	\emph{Step 4: From ${\mathscr L}^2({\mathscr B})$ to ${\mathscr L}^2(\mu)$ and best approximation.}
	Since $0<m\le f\le M < \infty$ on $\X$, the norms $\|\cdot\|_{{\mathscr L}^2(\mu)}$ and
	$\|\cdot\|_{{\mathscr L}^2(\lambda)}$ are equivalent on $\X$:
	\[
	\|u\|_{{\mathscr L}^2(\mu)}^2 = \int_\X u^2 f\,d\lambda
	\le M\int_\X u^2\,d\lambda
	\le M\int_{{\mathscr B}}u^2\,d\lambda
	= M\|u\|_{{\mathscr L}^2({\mathscr B})}^2.
	\]
	Applying this to $u=h_{x,\varepsilon}-p_d$ yields
	\[
	\|h_{x,\varepsilon}-p_d\|_{{\mathscr L}^2(\mu)}
	\le \sqrt{M}\,\|h_{x,\varepsilon}-p_d\|_{{\mathscr L}^2({\mathscr B})}
	\le C_4(\phi,f,k,n)\,d^{-k}\,\varepsilon^{-n/2-k}.
	\]	
	Now, $\Pi_d h_{x,\varepsilon}$ is the orthogonal projection of $h_{x,\varepsilon}$
	onto $V_d$ in ${\mathscr L}^2(\mu)$, hence the best ${\mathscr L}^2(\mu)$ approximation of
	$h_{x,\varepsilon}$ by degree-$d$ polynomials. Therefore
	\[
	\big\|h_{x,\varepsilon}-\Pi_d h_{x,\varepsilon}\big\|_{{\mathscr L}^2(\mu)}
	\le \big\|h_{x,\varepsilon}-p_d\big\|_{{\mathscr L}^2(\mu)}
	\le C_4(\phi,f,k,n)\,d^{-k}\,\varepsilon^{-n/2-k}.
	\]
	Rewriting this bound yields
	\[
	\big\|h_{x,\varepsilon}-\Pi_d h_{x,\varepsilon}\big\|_{{\mathscr L}^2(\mu)}
	\le C_4(\phi,f,k,n)\,\varepsilon^{-n/2}(\varepsilon d)^{-k},
	\]
	and squaring gives \eqref{eq:proj-L2mu} with
	$C(\phi,f,k,n):=C_4(\phi,f,k,n)^2$.
	
	Finally, since $\Pi_d$ is an orthogonal projection in ${\mathscr L}^2(\mu)$,
	\[
	\|h_{x,\varepsilon}\|_{{\mathscr L}^2(\mu)}^2
	= \|\Pi_d h_{x,\varepsilon}\|_{{\mathscr L}^2(\mu)}^2
	+ \|h_{x,\varepsilon}-\Pi_d h_{x,\varepsilon}\|_{{\mathscr L}^2(\mu)}^2,
	\]
	so the difference of squared norms is exactly the squared projection error,
	and \eqref{eq:proj-sqdiff} follows.
\end{proof}

\begin{remark}[Normalized projection error]\label{rmk:NPE}
	Recall that
	\[
	\|\phi_{x,\varepsilon}\|_{{\mathscr L}^2(\lambda)}^2
	= \int_\X \left(\frac{1}{\epsilon^n}\phi\left(\frac{x-y}{\epsilon}\right)\right)^2\,d\lambda(y)
	= \varepsilon^{-n}\|\phi\|_{{\mathscr L}^2(\R^n)}^2
	\]
	whenever $\mathscr B(z,\epsilon)\subseteq X$. Thus, when we consider the \emph{normalized} projection
	error appearing in~\eqref{eq:error-decomposition},
	\[
	\frac{
		\big|\|h_{x,\varepsilon}\|_{{\mathscr L}^2(\mu)}^2
		- \|\Pi_d h_{x,\varepsilon}\|_{{\mathscr L}^2(\mu)}^2\big|
	}{
		\|\phi_{x,\varepsilon}\|_{{\mathscr L}^2(\lambda)}^2
	}
	\;\le\;
	C''(\phi,f,k,n)\,(\varepsilon d)^{-2k},
	\]
	for some constant $C''$ independent of $x\in K$, $\varepsilon\leq \varepsilon_0$ and $d$. This is the
	form that enters directly in the density estimator error analysis.
\end{remark}

\subsubsection{Convergence rates with local support mollifiers}

Combining the approximation error (Lemma~\ref{lem:approx-error}) and the
projection error (Lemma~\ref{lem:proj-error}) with the error
decomposition~\eqref{eq:error-decomposition}, we obtain the following
quantitative density recovery result.

\begin{theorem}[Density recovery with explicit rate]
	\label{thm:rate-general}
	If Assumption~\ref{assumpt:Rn} and Assumption~\ref{ass:sobolev} hold with a density
	$f = 1/g \in \mathscr C^2(\X)$ and
	 $k$ is an integer with $1\le k\le s$ and $k>n/2$ as in
	Lemma~\ref{lem:proj-error} then for every compact set
	$K\subset\interior\X$ there exist constants $A,B>0$ and
	$\varepsilon_0>0$ such that for every $x\in K$, every $d\in\mathbb N$
	and every $0<\varepsilon<\varepsilon_0$,
	\begin{equation}
		\label{eq:rate-general}
		\bigl|\widehat{g}_{d,\varepsilon}(x) - g(x)\bigr|
		\;\le\;
		A\,(\varepsilon d)^{-2k} \;+\; B\,\varepsilon^2,
	\end{equation}
	where $\widehat{g}_{d,\varepsilon}$ is defined
	in~\eqref{eq:estimator}. In particular, choosing $\varepsilon
	= d^{-\frac{k}{k+1}}$ yields
	\[
	\bigl|\widehat{g}_{d,\varepsilon_d}(x) - g(x)\bigr|
	= O\bigl(d^{-\frac{2k}{k+1}}\bigr)
	\]
	uniformly on compact subsets of $\interior\X$.
\end{theorem}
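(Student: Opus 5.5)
The plan is to combine the error decomposition \eqref{eq:error-decomposition} with the two lemmas already established, taking a common $\varepsilon_0$ and then balancing the two terms.

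First fix a compact set $K\subset\interior\X$. Lemma~\ref{lem:approx-error} applies because $f\in\mathscr C^2(\X)$ and $f\ge m>0$, so $g=1/f\in\mathscr C^2$ as well. It yields constants $B$ and $\varepsilon_0^{(1)}$ such that the approximation error is at most $B\varepsilon^2$ for all $x\in K$ and all $\varepsilon<\varepsilon_0^{(1)}$.

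Next, Lemma~\ref{lem:proj-error} gives $\varepsilon_0^{(2)}$ such that the squared projection error in \eqref{eq:proj-sqdiff} is at most $C\varepsilon^{-n}(\varepsilon d)^{-2k}$. To normalize it, divide by $\|\phi_{x,\varepsilon}\|^2_{{\mathscr L}^2(\lambda)}$. Remark~\ref{rmk:NPE} shows that this norm equals $\varepsilon^{-n}\|\phi\|^2_{{\mathscr L}^2(\R^n)}$ whenever $\mathscr B(x,\varepsilon)\subseteq\X$. That inclusion holds uniformly for $x\in K$ once $\varepsilon<\operatorname{dist}(K,\partial\X)$, by compactness. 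The normalized projection error is therefore at most $A(\varepsilon d)^{-2k}$ with $A=C/\|\phi\|^2_{{\mathscr L}^2}$, which is independent of $x$, $d$ and $\varepsilon$. The point to check is that the factor $\varepsilon^{-n}$ cancels exactly; it does, since both quantities scale like $\varepsilon^{-n}$.

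Set $\varepsilon_0=\min\{\varepsilon_0^{(1)},\varepsilon_0^{(2)},\operatorname{dist}(K,\partial\X)\}$. Inserting both bounds into \eqref{eq:error-decomposition} gives \eqref{eq:rate-general}.

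For the rate, substitute $\varepsilon=d^{-k/(k+1)}$. Then $\varepsilon d=d^{1/(k+1)}$, so $(\varepsilon d)^{-2k}=d^{-2k/(k+1)}$, and $\varepsilon^2=d^{-2k/(k+1)}$ as well. The two terms balance, giving $O(d^{-2k/(k+1)})$ uniformly on $K$. This choice satisfies $\varepsilon<\varepsilon_0$ only for $d$ large enough. Finitely many small $d$ do not affect the $O$-statement, because $\widehat g$ is bounded by $\|h\|^2/\|\phi\|^2\le 1/m$ and $g$ is bounded by $1/m$ too.

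The only real obstacle is the uniformity in $x$ of the constants, and both lemmas are stated with that uniformity already. The work here is essentially bookkeeping.
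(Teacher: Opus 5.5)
Your proposal is correct and follows the paper's own proof. You use the error decomposition, take $B\varepsilon^2$ from Lemma~\ref{lem:approx-error}, cancel the $\varepsilon^{-n}$ in Lemma~\ref{lem:proj-error} against $\|\phi_{x,\varepsilon}\|^2_{{\mathscr L}^2(\lambda)}=\varepsilon^{-n}\|\phi\|^2$ to get $A(\varepsilon d)^{-2k}$, and balance the two terms with $\varepsilon=d^{-k/(k+1)}$. You also handle the finitely many $d$ with $d^{-k/(k+1)}\ge\varepsilon_0$ via $0\le\widehat g_{d,\varepsilon},\,g\le 1/m$, a small point the paper passes over silently.
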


\begin{proof}
	Fix a compact set $K\subset\interior\X$.
	For each $x\in K$, $d\in\mathbb N$ and $0<\varepsilon<\varepsilon_0$,
	the error decomposition~\eqref{eq:error-decomposition} gives
	\[
	\bigl|\widehat{g}_{d,\varepsilon}(x) - g(x)\bigr|
	\;\le\;
	e_{\mathrm{proj}}(x,d,\varepsilon)
	+ e_{\mathrm{approx}}(x,\varepsilon),
	\]
	where
	\[
	e_{\mathrm{proj}}(x,d,\varepsilon)
	:=
	\frac{
		\bigl|
		\| \Pi_d h_{x,\varepsilon}\|_{{\mathscr L}^2(\mu)}^2
		- \|h_{x,\varepsilon}\|_{{\mathscr L}^2(\mu)}^2
		\bigr|
	}{
		\|\phi_{x,\varepsilon}\|_{{\mathscr L}^2(\lambda)}^2
	},
	\qquad
	e_{\mathrm{approx}}(x,\varepsilon)
	:=
	\left|
	\frac{
		\|h_{x,\varepsilon}\|_{{\mathscr L}^2(\mu)}^2
	}{
		\|\phi_{x,\varepsilon}\|_{{\mathscr L}^2(\lambda)}^2
	}
	- g(x)
	\right|.
	\]
	
	\medskip\noindent
	\emph{Step 1: Approximation error.}
	By Lemma~\ref{lem:approx-error} (approximation error), since $f\in \mathscr C^2(\X)$ and
	$K\subset\interior\X$, there exist constants $B>0$ and $\varepsilon_0>0$
	such that, for all $x\in K$ and all $0<\varepsilon<\varepsilon_0$,
	\begin{equation}\label{eq:bound-approx}
		e_{\mathrm{approx}}(x,\varepsilon)
		\;\le\; B\,\varepsilon^2.
	\end{equation}
	The constants depend only on $f$, $\phi$ and $K$, and not on $d$.
	
	\medskip\noindent
	\emph{Step 2: Projection error.}
	By Lemma~\ref{lem:proj-error} (projection error), under
	Assumption~\ref{ass:sobolev} we have that there exists $\varepsilon_0$ such that
$\mathscr B(x,\varepsilon_0)
	\subset\interior X\subset{\mathscr B}$ for every $x\in K$ and such that for all $\epsilon<\varepsilon_0$,
	\begin{equation}\label{eq:bound-proj-L2}
		0\le
		\|h_{x,\varepsilon}\|_{{\mathscr L}^2(\mu)}^2
		- \|\Pi_d h_{x,\varepsilon}\|_{{\mathscr L}^2(\mu)}^2
		= \|h_{x,\varepsilon} - \Pi_d h_{x,\varepsilon}\|_{{\mathscr L}^2(\mu)}^2
		\;\le\;
		C_1(\phi,f,k,n)\,\varepsilon^{-n}(\varepsilon d)^{-2k}.
	\end{equation}
	
	On the other hand, by the direct computation of Remark~\ref{rmk:NPE}, the denominator satisfies	\[
	\|\phi_{x,\varepsilon}\|_{{\mathscr L}^2(\lambda)}^2
	= \varepsilon^{-n}\|\varphi\|_{{\mathscr L}^2(\R^n)}^2
	=: C_\varphi\,\varepsilon^{-n},
	\]
	independently of $x\in K$ and $d$ whenever $\varepsilon<\varepsilon_0$. 	
    Therefore
	\[
	e_{\mathrm{proj}}(x,d,\varepsilon)
	= \frac{
		\bigl|
		\|\Pi_d h_{x,\varepsilon}\|_{{\mathscr L}^2(\mu)}^2
		- \|h_{x,\varepsilon}\|_{{\mathscr L}^2(\mu)}^2
		\bigr|
	}{
		\|\phi_{x,\varepsilon}\|_{{\mathscr L}^2(\lambda)}^2
	}
	\le
	\frac{
		C_1(\varphi,f,k,n)\,\varepsilon^{-n}(\varepsilon d)^{-2k}
	}{
		C_\varphi\,\varepsilon^{-n}
	}
	=
	A\,(\varepsilon d)^{-2k},
	\]
	with $A := C_1(\varphi,f,k,n)/C_\varphi$ independent of $x$, $d$ and
	$\varepsilon$.

	Thus, for all $x\in K$, all $d\in\mathbb N$ and all
	$0<\varepsilon<\varepsilon_0$,
	\begin{equation}\label{eq:combined-bound}
		\bigl|\widehat{g}_{d,\varepsilon}(x) - g(x)\bigr|
		\;\le\;
		A\,(\varepsilon d)^{-2k} \;+\; B\,\varepsilon^2,
	\end{equation}
	which is exactly~\eqref{eq:rate-general}.
	
	\medskip\noindent
	\emph{Step 3: Choice of $\varepsilon_d$ and optimization.}
	Letting $\varepsilon_d := d^{-\gamma}$ for some $\gamma\in(0,1)$.
	and substituting $\varepsilon_d$ into~\eqref{eq:combined-bound} yields
	\[
	\bigl|\widehat{g}_{d,\varepsilon_d}(x) - g(x)\bigr|
	\le
	A\,(\varepsilon_d d)^{-2k} + B\,\varepsilon_d^2
	= A\,d^{-2k(1-\gamma)} + B\,d^{-2\gamma}.
	\]
	Hence there exists a constant $C>0$, independent of $x\in K$ and $d$,
	such that
	\[
	\bigl|\widehat{g}_{d,\varepsilon_d}(x) - g(x)\bigr|
	\le C\,d^{-\theta},
	\]
	where
	\[
	\theta = \min\bigl\{2k(1-\gamma),\,2\gamma\bigr\}.
	\]
	
	To maximize $\theta$ over $\gamma\in(0,1)$, we balance the two exponents:
	\[
	2k(1-\gamma) = 2\gamma
	\quad\Longleftrightarrow\quad
	k(1-\gamma) = \gamma
	\quad\Longleftrightarrow\quad
	k = \gamma(k+1)
	\quad\Longleftrightarrow\quad
	\gamma = \frac{k}{k+1}.
	\]
	With this choice,
	\[
	2\gamma = \frac{2k}{k+1},
	\qquad
	2k(1-\gamma) = 2k\Bigl(1-\frac{k}{k+1}\Bigr)
	= \frac{2k}{k+1},
	\]
	so both terms decay like $d^{-2k/(k+1)}$ and
	\[
	\theta = \frac{2k}{k+1}.
	\]
	This shows that
	\[
	\bigl|\widehat{g}_{d,\varepsilon_d}(x) - g(x)\bigr|
	= O\bigl(d^{-2k/(k+1)}\bigr),
	\]
	uniformly for $x\in K$. Since $K\subset\interior\X$ was arbitrary,
	the convergence is uniform on compact subsets of $\interior\X$.
\end{proof}

\begin{remark}
	The integer $k$ can be chosen as any integer satisfying $n/2<k\le s$, where
	$s$ is the Sobolev regularity exponent in Assumption~\ref{ass:sobolev}.
	Increasing $s$ allows one to choose larger $k$, which improves the rate
	$2k/(k+1)$ towards~$2$. This is consistent with classical spectral
	approximation results for polynomial approximation in Sobolev spaces;
	see, e.g.,~\cite{LiXu2014} and references therein.
\end{remark}

	\subsection{Density recovery with algebraic mollifiers on the sphere}
	\label{sec:sphere}

Let $S:=S^{n-1}\subset\R^n$ be the unit sphere and let $\lambda$ denote the rotation-invariant probability measure on $S$. In this section we develop quantitative estimates for density recovery on the sphere using algebraic mollifiers as in Example~\ref{example:sphere}. 

More precisely, for positive integers $k$, let us define the family of mollifiers,
\begin{equation} \label{eq:polynomial_mollifier}
g_k(t) := \left(\frac{t+1}{2}\right)^k,
\qquad
\phi_{x,k}(y) := g_k(\langle x,y\rangle),
\qquad x,y\in S.
\end{equation}

We assume throughout that the density $d\mu(y)=f(y)d\lambda(y)$ satisfies $0<m\le f\le M < \infty$ for given constants $m$ and $M$ and $y\in S$. In particular, it follows that the support of the unknown measure is the whole sphere and that the support location and density estimator MCD kernels coincide. We denote both with the symbol
$K_{d}^{\mu}(x,y)$ defined by
\[
	K_{d,k}^{\mu}(x,y)
	:= \ip{\Pi_d h_{x,k}}{\Pi_d h_{y,k}}_{{\mathscr L}^2(\mu)},
	\]
where $h_{x,k}(y) := \frac{\phi_{x,k}(y)}{f(y)}$ and the projection is taken onto the space $V_d$ of spherical polynomials of degree $\leq d$.
As previously, we consider the estimator
\[
\widehat{g}_d(x) := \frac{p_d^\mu(x)}{\|\phi_{x,k}\|_{{\mathscr L}^2(\lambda)}^2},
\qquad g(x):=\frac{1}{f(x)}.
\]
and study the projection and approximation errors independently. Note that the degree $k$ of the polynomial $g_k$ plays the role of the chosen mollifier resolution $\epsilon$ whereas $d$ measures the degree up to which we know the moments of the measure $\mu$. At the end of the section we will select $k$ as a function of $d$ in a way which leads to optimal error estimates depending only on $d$.
	
\subsubsection{Approximation error}
\label{subsec:approx-error-sphere}

For each fixed $x\in S$, let $\nu_{x,k}$ be the probability measure
on $S$ with density proportional to $\phi_{x,k}(y)^2$:
\[
d\nu_{x,k}(y)
:= \frac{\phi_{x,k}(y)^2}{\displaystyle\int_{S}\phi_{x,k}(z)^2\,d\lambda(z)}
\,d\lambda(y).
\]

Then the approximation error at $x$ is given by
\[
\frac{\displaystyle\int_{S} \phi_{x,k}(y)^2 g(y)\,d\lambda(y)}
{\displaystyle\int_{S} \phi_{x,k}(y)^2\,d\lambda(y)}
- g(x)
= \int_{S} \bigl(g(y)-g(x)\bigr)\,d\nu_{x,k}(y).
\]

We first treat the Lipschitz case and then the $\mathscr C^2$ case.

\begin{lemma}[Approximation error on the sphere, Lipschitz case]
	\label{lem:approx-sphere-Lip}
	Assume that $g=1/f$ is Lipschitz on $S$ with Lipschitz constant $K$
	with respect to the geodesic distance $d_{S}(x,y)$.
	Then for all $x\in S$ and all $k\in\mathbb N$,
	\[
	\left|
	\frac{\displaystyle\int_{S} \phi_{x,k}(y)^2 g(y)\,d\lambda(y)}
	{\displaystyle\int_{S} \phi_{x,k}(y)^2\,d\lambda(y)}
	- g(x)
	\right|
	\le \,
    \frac{K \pi}{\sqrt{2}} \sqrt{\frac{n-1}{2k+n-1}}\, = \,O(k^{-\frac{1}{2}}).
	\]
\end{lemma}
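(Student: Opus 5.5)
The plan is to write the error as an average against the probability measure $\nu_{x,k}$, as in the identity displayed just before the statement, and then control it through a second moment of the distance. Since $g$ is $K$-Lipschitz for the geodesic distance, the triangle inequality followed by Cauchy--Schwarz with respect to the probability measure $\nu_{x,k}$ gives
\[
\left|\int_S (g(y)-g(x))\,d\nu_{x,k}(y)\right|\le K\int_S d_S(x,y)\,d\nu_{x,k}(y)\le K\left(\int_S d_S(x,y)^2\,d\nu_{x,k}(y)\right)^{1/2}.
\]
This reduces the lemma to a variance-type bound for the measure $\nu_{x,k}$, which is the spherical analogue of the quantity in the Remark following Lemma~\ref{lem:approx-error}.

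Next I will compare the geodesic distance with the chordal distance. Writing $\theta=d_S(x,y)\in[0,\pi]$, we have $\|x-y\|=2\sin(\theta/2)\ge \frac{2}{\pi}\theta$ by concavity of $\sin$ on $[0,\pi/2]$. Hence
\[
\theta^2\le \frac{\pi^2}{4}\|x-y\|^2=\frac{\pi^2}{4}\cdot 2\bigl(1-\langle x,y\rangle\bigr).
\]
It therefore suffices to compute $\int_S (1-\langle x,y\rangle)\,d\nu_{x,k}(y)$ exactly.

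Since $\phi_{x,k}$ is zonal, this integral depends only on $t=\langle x,y\rangle$. Under $\lambda$ the pushforward of $t$ has density proportional to $(1-t^2)^{(n-3)/2}$ on $[-1,1]$, so under $\nu_{x,k}$ its density is proportional to $\left(\frac{1+t}{2}\right)^{2k}(1-t^2)^{(n-3)/2}$. I will substitute $u=(1+t)/2$, so that $1-t=2(1-u)$. The density of $u$ is then proportional to
\[
u^{2k+\frac{n-3}{2}}(1-u)^{\frac{n-3}{2}},
\]
which is a $\mathrm{Beta}(a,b)$ density with $a=2k+\frac{n-1}{2}$ and $b=\frac{n-1}{2}$. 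The standard mean formula $\mathbb E[1-u]=b/(a+b)$ then gives
\[
\int_S(1-\langle x,y\rangle)\,d\nu_{x,k}=2\cdot\frac{(n-1)/2}{2k+n-1}=\frac{n-1}{2k+n-1}.
\]
This value does not depend on $x$, by rotation invariance of $\lambda$.

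Combining the three steps,
\[
\int_S \theta^2\,d\nu_{x,k}\le \frac{\pi^2}{2}\cdot\frac{n-1}{2k+n-1},
\]
and taking square roots yields the stated bound $\frac{K\pi}{\sqrt2}\sqrt{\frac{n-1}{2k+n-1}}=O(k^{-1/2})$. The only step needing care is the Beta-integral identification: the exponents must be tracked correctly through the change of variables, and the case $n=2$ (where $b=1/2$) still gives integrable densities. Everything else is elementary.
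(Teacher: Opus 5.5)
Your proposal is correct and follows essentially the same route as the paper: the Lipschitz bound against $\nu_{x,k}$, then Jensen/Cauchy--Schwarz, then the bound $\arccos t\le\frac{\pi}{\sqrt2}\sqrt{1-t}$, then the Funk--Hecke reduction with weight $(1-t^2)^{(n-3)/2}$ and the substitution $u=(1+t)/2$, which gives $\frac{n-1}{2k+n-1}$. The differences are cosmetic. You obtain the arccos inequality from $\|x-y\|=2\sin(\theta/2)$ and Jordan's inequality, which is shorter than the paper's monotonicity argument for $(1-\cos\theta)/\theta^2$ in Appendix~\ref{sec:trigo}. You also read off the Beta mean $b/(a+b)$ directly, where the paper manipulates Gamma functions.
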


\begin{proof}
	Fix $x\in S$ and $k\in\mathbb N$. By Lipschitz continuity of $g$,
	\[
	|g(y)-g(x)|
	\le K\,d_{S}(x,y),
	\qquad \forall y\in S.
	\]
	Hence
	\begin{align*}
		\left|
		\frac{\displaystyle\int_{S} \phi_{x,k}(y)^2 g(y)\,d\lambda(y)}
		{\displaystyle\int_{S} \phi_{x,k}(y)^2\,d\lambda(y)}
		- g(x)
		\right|
		&= \left|\int_{S} \bigl(g(y)-g(x)\bigr)\,d\nu_{x,k}(y)\right| \\
		&\le K \int_{S} d_{S}(x,y)\,d\nu_{x,k}(y).
	\end{align*}
	Applying Jensen's inequality to the nonnegative random variable
	$Y\mapsto d_{S}(x,Y)$ under $\nu_{x,k}$ yields
	\[
	\int_{S} d_{S}(x,y)\,d\nu_{x,k}(y)
	\le \sqrt{\int_{S} d_{S}(x,y)^2\,d\nu_{x,k}(y)}.
	\]
	On the sphere, the geodesic distance can be expressed as
	$d_{S}(x,y) = \arccos\langle x,y\rangle$; moreover,
	for $t\in[-1,1]$ one has the inequality
	\[
	\arccos(t) \le \frac{\pi}{\sqrt{2}}\,\sqrt{1-t},
	\]
	see  Appendix \ref{sec:trigo} for a proof.
	Therefore
	\[
	d_{S}(x,y)^2
	\le \frac{\pi^2}{2}\,(1-\langle x,y\rangle),
	\]
	and we obtain
	\begin{align*}
		\int_{S} d_{S}(x,y)^2\,d\nu_{x,k}(y)
		&\le \frac{\pi^2}{2}
		\int_{S} (1-\langle x,y\rangle)\,d\nu_{x,k}(y) \\
		&= \frac{\pi^2}{2}\,
		\frac{\displaystyle\int_{S} (1-\langle x,y\rangle)\phi_{x,k}(y)^2\,d\lambda(y)}
		{\displaystyle\int_{S} \phi_{x,k}(y)^2\,d\lambda(y)}.
	\end{align*}
	Combining the above estimates, we arrive at
	\begin{equation}\label{eq:error-Lip-ratio}
		\left|
		\frac{\displaystyle\int_{S} \phi_{x,k}(y)^2 g(y)\,d\lambda(y)}
		{\displaystyle\int_{S} \phi_{x,k}(y)^2\,d\lambda(y)}
		- g(x)
		\right|
		\le
		\frac{K\pi}{\sqrt{2}}\,
		\sqrt{
			\frac{\displaystyle\int_{S} (1-\langle x,y\rangle)\phi_{x,k}(y)^2\,d\lambda(y)}
			{\displaystyle\int_{S} \phi_{x,k}(y)^2\,d\lambda(y)}
		}.
	\end{equation}
	
	We now reduce the ratio of integrals to a univariate expression using the
	Funk-Hecke formula, see    Appendix \ref{sec:funk-hecke}. Since $\phi_{x,k}(y)^2$ depends only on
	$t:=\langle x,y\rangle$, we can write
    \[
    R_k :=
	\frac{\displaystyle\int_{S} (1-\langle x,y\rangle)\phi_{x,k}(y)^2\,d\lambda(y)}
	{\displaystyle\int_{S} \phi_{x,k}(y)^2\,d\lambda(y)}
	=
	\frac{ c_{\alpha} \displaystyle\int_{-1}^1 (1-t)\,g_{k}(t)^2\,w(t)\,dt}
	{ c_{\alpha}\displaystyle\int_{-1}^1 g_{k}(t)^2\,w(t)\,dt}.
	\]
    
	where $\alpha = \frac{n-2}{2}$, $w(t) := (1-t^2)^{\alpha-1/2}$ is the Gegenbauer weight and
	$c_\alpha>0$ is a normalization constant (independent of $k$). The constant $c_\alpha$ cancels in the ratio. 

    We can use the change of variables \(r = \frac{1+t}{2}\) in order to compute those integrals. Then
    \[
    t = 2r-1, \quad dt = 2 dr.
    \]
    This implies that
    \[
    1-t = 2(1-r), \quad 1+t = 2r, \quad 1-t^2 = (1-t)(1+t) = 4r(1-r).
    \]

    Now, the ratio \(R_k\) can be computed as
    \[
    R_k 
    = \frac{\int_0^1 2(1-r) \, r^{2k} \, (4r(1-r))^{\alpha -\frac{1}{2}} dr}{\int_0^1 r^{2k} \, (4r(1-r))^{\alpha-\frac{1}{2}} \, 2 dr}
    = 2 \frac{\int_0^1 r^{2k+\alpha-\frac{1}{2}} (1-r)^{\alpha+\frac{1}{2}} dr}
    {\int_0^1 r^{2k+\alpha-\frac{1}{2}} (1-r)^{\alpha-\frac{1}{2}} dr}.
    \]

    Let us express this integral as Beta functions. Recall
    \[
    \int_0^1 r^{p-1} (1-r)^{q-1} dr = B(p,q) = \frac{\Gamma(p)\Gamma(q)}{\Gamma(p+q)},
    \]
    and that \(\Gamma\) satisfies
    \[
    \Gamma(z+1)=z\Gamma(z).
    \]
    So we have that
    \begin{align*}
    R_k &= 
    2 \frac{B(2k +\alpha +\frac{1}{2},\alpha +\frac{3}{2})}{B(2k+\alpha+ \frac{1}{2},\alpha+\frac{1}{2})}\\
    &= 2 \frac{\Gamma(2k+\alpha+\frac{1}{2})\Gamma(\alpha +\frac{3}{2})}{\Gamma(2k+2\alpha+2)} \frac{\Gamma(2k+2\alpha +1)}{\Gamma(2k+\alpha+\frac{1}{2})\Gamma(\alpha +\frac{1}{2})}\\
    &= \frac{ 2\alpha + 1}{2k+2\alpha+1}.        
    \end{align*}
    Finally, if we substitute \(\alpha\) by \(\frac{n-2}{2}\), we obtain
    \[
    R_k = \frac{n-1}{2k+n-1}. 
    \]
\end{proof}

We now study the case under $\mathscr C^2$ regularity for the density.

\begin{lemma}[Approximation error on the sphere, $\mathscr C^2$ case]
	\label{lem:approx-sphere-C2}
	Assume that $g=1/f$ is strictly positive and belongs to $\mathscr C^2(S)$.
	Let
	\[
	\Lambda := \sup_{x\in S, z} \left|
	\frac{z^\top \nabla^2 g(x)z}{z^\top z} \right| \in (0,\infty),
	\]
	where $\nabla^2 g(x)$ is the Riemannian Hessian of $g$ at $x$ on $S$.
	Then, for all $x\in S$ and all $k\in\mathbb N$,
	\[
	\left|
	\frac{\displaystyle\int_{S} \phi_{x,k}(y)^2 g(y)\,d\lambda(y)}
	{\displaystyle\int_{S} \phi_{x,k}(y)^2\,d\lambda(y)}
	- g(x)
	\right|
	\le \frac{\pi^2 \Lambda}{4}\, \frac{n-1}{2k+n-1}
    = O(k^{-1}).
	\]
\end{lemma}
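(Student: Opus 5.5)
Fix $x\in S$ and $k$. We work with the probability measure $\nu_{x,k}$, so the error equals $\int_S (g(y)-g(x))\,d\nu_{x,k}(y)$. We expand $g$ along geodesics from $x$. Every $y\ne -x$ can be written uniquely as
\[
y=\cos(\theta)\,x+\sin(\theta)\,u,\qquad \theta=d_S(x,y)\in[0,\pi),\quad u\in S,\ u\perp x,
\]
with the geodesic $\gamma_u(s)=\cos(s)x+\sin(s)u$. Taylor's formula with Lagrange remainder applied to $s\mapsto g(\gamma_u(s))$ on $[0,\theta]$ gives
\[
g(y)=g(x)+\theta\,\langle \nabla g(x),u\rangle+\tfrac12\theta^2\,(g\circ\gamma_u)''(\xi).
\]
Because $\gamma_u$ is a geodesic, $(g\circ\gamma_u)''(\xi)=\dot\gamma_u(\xi)^\top\nabla^2 g(\gamma_u(\xi))\dot\gamma_u(\xi)$ with $|\dot\gamma_u|=1$. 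Hence $\bigl|(g\circ\gamma_u)''(\xi)\bigr|\le\Lambda$ for the Riemannian Hessian, uniformly in $x$, $u$ and $\xi$. The antipodal point $y=-x$ has $\lambda$-measure zero and can be ignored.

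Next we show that the first-order term integrates to zero. The density of $\nu_{x,k}$ depends only on $t=\langle x,y\rangle=\cos\theta$. It is therefore invariant under the reflection $y\mapsto 2\langle x,y\rangle x-y$, which fixes $x$ and sends $u$ to $-u$ while keeping $\theta$. Since $\theta\langle\nabla g(x),u\rangle$ changes sign under this map,
\[
\int_S\theta\,\langle\nabla g(x),u\rangle\,d\nu_{x,k}(y)=0.
\]
Equivalently, in polar coordinates around $x$, with $\nu_{x,k}$ written in the variables $(\theta,u)$ on $(0,\pi)\times S^{n-2}$, the density is independent of $u$, and integrating the linear form in $u$ over $S^{n-2}$ gives zero. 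This step replaces part~$(2)$ of Assumption~\ref{assumpt:Rn} from the Euclidean case.

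The remainder is then bounded by
\[
\left|\int_S(g(y)-g(x))\,d\nu_{x,k}\right|\le\frac{\Lambda}{2}\int_S d_S(x,y)^2\,d\nu_{x,k}(y).
\]
As in the proof of Lemma~\ref{lem:approx-sphere-Lip}, the inequality $\arccos(t)\le\frac{\pi}{\sqrt2}\sqrt{1-t}$ from Appendix~\ref{sec:trigo} gives $d_S(x,y)^2\le\frac{\pi^2}{2}(1-\langle x,y\rangle)$. The Funk–Hecke/Beta-function computation from that proof then gives
\[
\int_S(1-\langle x,y\rangle)\,d\nu_{x,k}(y)=R_k=\frac{n-1}{2k+n-1}.
\]
Multiplying, we obtain $\frac{\Lambda}{2}\cdot\frac{\pi^2}{2}R_k=\frac{\pi^2\Lambda}{4}\cdot\frac{n-1}{2k+n-1}$, which is the claimed bound.

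The main obstacle is making the second-order expansion rigorous on the whole sphere rather than only near $x$. The density of $\nu_{x,k}$ is not compactly supported, so the argument relies on the geodesic Taylor expansion being valid for all $\theta<\pi$ with a single uniform Hessian bound. This holds because $g\in\mathscr C^2(S)$ on the compact manifold $S$, and because the exponential map at $x$ is a diffeomorphism from the open ball of radius $\pi$ onto $S\setminus\{-x\}$. I would also check carefully that the reflection symmetry argument is legitimate, in particular that $\theta\,u$ depends measurably on $y$ and is $\nu_{x,k}$-integrable. This is clear since $|\theta u|\le\pi$.
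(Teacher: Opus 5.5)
Your proof is correct and follows essentially the same route as the paper: a second-order geodesic Taylor expansion with remainder bounded by $\frac{\Lambda}{2}d_S(x,y)^2$, cancellation of the first-order term by the symmetry of $\nu_{x,k}$ about $x$, then the inequality $\arccos(t)\le\frac{\pi}{\sqrt2}\sqrt{1-t}$ and the Funk--Hecke/Beta computation $\int_S(1-\langle x,y\rangle)\,d\nu_{x,k}=\frac{n-1}{2k+n-1}$. The differences are cosmetic: you use a Lagrange remainder where the paper uses an integral remainder, and the explicit orthogonal map $y\mapsto 2\langle x,y\rangle x-y$ where the paper uses invariance under rotations about $x$; your reflection gives a slightly cleaner justification that the linear term integrates to zero.
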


\begin{proof}
Fix $x\in S$ and let $\exp_x:T_xS\to S$ be the Riemannian exponential map at $x$.
For any $v\in T_xS$, the Riemannian Taylor expansion of $g$ reads
\[
g(\exp_x(v))
=
g(x)
+
\nabla g(x)[v]
+
\int_0^1 (1-s)\,\nabla^2 g(\exp_x(sv))[v,v]\,ds,
\]
where $\nabla g(x)\in T_xS$ is the linear form (gradient) and $\nabla^2 g(x)$ is the symmetric bilinear form (Hessian). 
    
    By the definition of $\Lambda$, we obtain
\[
\left|
\int_0^1 (1-s)\,\nabla^2 g(\exp_x(sv))[v,v]\,ds
\right|
\le
\Lambda \|v\|^2 \int_0^1 (1-s)\,ds
=
\frac{\Lambda}{2}\|v\|^2.
\] 
Any point $y\in S$ can be written as $y=\exp_x(v)$ with
$v = \exp_x^{-1}(y)\in T_xS$ and $\|v\| = d_{S}(x,y)$. On
the unit sphere, one has the explicit formula
\[
v = \exp_x^{-1}(y)
= \arccos(\langle x,y\rangle)\,u_y,
\]
where $u_y\in T_xS$ is a unit tangent vector orthogonal to $x$.
Thus $\|v\|=d_{S}(x,y) = \arccos(\langle x,y\rangle)$, and
\[
g(y)
=
g(x)
+
\arccos(\langle x,y\rangle)\,\nabla g(x)[u_y]
+
R_x(y),
\]
where the remainder satisfies
\[
|R_x(y)|
\le
\frac{\Lambda}{2}\,\arccos(\langle x,y\rangle)^2.
\]

Integrating against $\nu_{x,k}$ as before, we obtain
\[
\frac{\displaystyle\int_{S} \phi_{x,k}(y)^2 g(y)\,d\lambda(y)}
{\displaystyle\int_{S} \phi_{x,k}(y)^2\,d\lambda(y)}
=
g(x) + T_1 + T_2,
\]
where
\begin{align*}
T_1
&:=
\int_{S}
\arccos(\langle x,y\rangle)\,\nabla g(x)[u_y]
\,d\nu_{x,k}(y), \\
T_2
&:=
\int_{S}
R_x(y)
\,d\nu_{x,k}(y).
\end{align*}

	Because $\phi_{x,k}^2$ depends only on the inner product $\langle x,y\rangle$,
	the measure $\nu_{x,k}$ is invariant under rotations around the axis through $x$.
	In particular, the distribution of the tangent direction $u_y$ is symmetric and
	has zero mean. This implies
	\[
	\int_{S} \arccos(\langle x,y\rangle)\,u_y\,d\nu_{x,k}(y) = 0,
	\]
	and hence $T_1=0$. Thus the linear term vanishes by symmetry, exactly as in the
	Euclidean radial-mollifier case.
    
	For the quadratic term, we get
	\[
	|T_2|
	\le \frac{\Lambda}{2}
	\int_{S} \arccos(\langle x,y\rangle)^2\,d\nu_{x,k}(y).
	\]
	
	As before, we use the inequality
	\[
	\arccos(t) \le \frac{\pi}{\sqrt{2}}\sqrt{1-t},
	\qquad t\in[-1,1],
	\]
	of Appendix \ref{sec:trigo}.
	Therefore,
	\begin{equation*}
		|T_2|
		\le \frac{\pi^2 \Lambda}{4}\,
		\int_{S} (1-\langle x,y\rangle)\,d\nu_{x,k}(y).
	\end{equation*}

    As in the Lipschitz case, by the Funk-Hecke reduction of Appendix \ref{sec:funk-hecke},
	\[
		\int_{S} (1-\langle x,y\rangle)\,d\nu_{x,k}(y)
	= \frac{\int_{-1}^1 (1-t) \, g_k(t)^2 \,w(t)\,dt}
	{\int_{-1}^1 \, g_k(t)^2 \, w(t)\,dt}
	= \frac{n-1}{2k+n-1},
	\]
	by the same argument as above. This concludes the proof.
\end{proof}

\subsubsection{Projection error}

The projection error on the sphere involves approximating the localized
functions
\[
h_{x,k}(y) = \frac{\phi_{x,k}(y)}{f(y)},\qquad x,y\in S,
\]
by spherical polynomials of degree at most $d$ in ${\mathscr L}^2(\mu)$.
We control this error using spectral approximation results for spherical
harmonics.

Let ${\mathscr H}^s(S)$ denote the Sobolev space on the sphere of order
$s>0$, defined via the Laplace--Beltrami operator, see e.g.
\cite{DaiXu2013}.
For $s>(n-1)/2$, the space ${\mathscr H}^s(S)$ embeds continuously into
${\mathscr C}^0(S)$.

A standard Jackson-type inequality for spherical harmonics, see, e.g., \cite[Cor.~4.5.6]{DaiXu2013}, states that for any $s>0$
there exists a constant $C_s>0$ such that for every
$h\in{\mathscr H}^s(S)$ and every integer $d\ge1$,
\begin{equation}
	\label{eq:jackson-sphere}
	e_d(h)_{{\mathscr L}^2(\lambda)}
	:= \inf_{p\in{\mathscr V}_d}\|h-p\|_{{\mathscr L}^2(\lambda)}
	\le C_s\,d^{-s}\,\|h\|_{{\mathscr H}^s(S)},
\end{equation}
where ${\mathscr V}_d$ denotes the space of spherical polynomials of degree at
most $d$.

In our setting $h_{x,k} = \phi_{x,k}\,g$ with $g=1/f$ and $\phi_{x,k}$ a zonal
polynomial of degree $k$.

\begin{lemma}[Normalized projection error on the sphere]
\label{lem:proj-error-sphere-normalized}
Assume that $f$ is a strictly positive density on $S$ with respect to
$\lambda$, with $0<m\le f\le M<\infty$. Let \(\phi_{x,k}\) be the family of algebraic mollifiers as in \eqref{eq:polynomial_mollifier}.
Fix $s\in\{1,2\}$. Assume that \(g\in\mathscr H^s(S)\) and \(\partial^\beta g\in L^\infty(S)\) for every \(|\beta|\le s\).

Then:
\begin{enumerate}
\item If \(s=1\), the normalized projection error satisfies
\[
\frac{\big|\|h_{x,k}\|_{{\mathscr L}^2(\mu)}^2 - \|\Pi_d h_{x,k}\|_{{\mathscr L}^2(\mu)}^2\big|}
{\|\phi_{x,k}\|_{{\mathscr L}^2(\lambda)}^2}
= O \left( \frac{k}{d^2} \right).
\]
\item If \(s=2\), the normalized projection error satisfies
\[
\frac{\big|\|h_{x,k}\|_{{\mathscr L}^2(\mu)}^2 - \|\Pi_d h_{x,k}\|_{{\mathscr L}^2(\mu)}^2\big|}
{\|\phi_{x,k}\|_{{\mathscr L}^2(\lambda)}^2}
= O \left(\frac{k^2}{d^4} \right).
\]
\end{enumerate}
\end{lemma}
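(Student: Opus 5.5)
The plan mirrors the Euclidean argument of Lemma~\ref{lem:proj-error}. There the squared projection error is bounded by a Jackson estimate applied to the Sobolev norm of $h_{x,k}=\phi_{x,k}\,g$. That bound is then divided by $\|\phi_{x,k}\|^2_{{\mathscr L}^2(\lambda)}$. The new ingredient is the size of the derivatives of the zonal polynomial $\phi_{x,k}$ relative to its ${\mathscr L}^2$ norm.

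\textbf{Step 1 (reduction to a Jackson bound).} Since $\Pi_d$ is the $\mu$-orthogonal projection,
\[
\|h_{x,k}\|_{{\mathscr L}^2(\mu)}^2-\|\Pi_d h_{x,k}\|_{{\mathscr L}^2(\mu)}^2=\|h_{x,k}-\Pi_dh_{x,k}\|^2_{{\mathscr L}^2(\mu)}\le M\,e_d(h_{x,k})^2_{{\mathscr L}^2(\lambda)}.
\]
By \eqref{eq:jackson-sphere} this is at most $M C_s^2 d^{-2s}\|h_{x,k}\|^2_{{\mathscr H}^s(S)}$. It therefore suffices to show
\[
\|h_{x,k}\|^2_{{\mathscr H}^s(S)}\le C\,k^s\,\|\phi_{x,k}\|^2_{{\mathscr L}^2(\lambda)}
\]
uniformly in $x$ for $s=1,2$.

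\textbf{Step 2 (Leibniz).} I will take the equivalent norm built from $\|\nabla^j u\|_{{\mathscr L}^2}$ for $j\le s$, with covariant derivatives. Expanding $\nabla^j(\phi_{x,k}g)$ by the Leibniz rule and using $\partial^\beta g\in L^\infty$ for $|\beta|\le s$ gives
\[
\|h_{x,k}\|^2_{{\mathscr H}^s}\le C\sum_{j\le s}\|\nabla^j\phi_{x,k}\|^2_{{\mathscr L}^2(\lambda)}.
\]
This is why the hypothesis asks for $L^\infty$ control of the derivatives of $g$ rather than a Banach-algebra argument.

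\textbf{Step 3 (zonal derivative computations via Funk--Hecke).} With $t=\langle x,y\rangle$, the tangential gradient of $\phi_{x,k}$ is $g_k'(t)\,(x-ty)$, whose norm squared is $g_k'(t)^2(1-t^2)$. Here $g_k'(t)=\tfrac k2\big(\tfrac{1+t}{2}\big)^{k-1}$. The second covariant derivative is a combination of $g_k''(t)(1-t^2)$ and $g_k'(t)\,t$ terms. After the substitution $r=(1+t)/2$ and the Funk--Hecke reduction of Appendix~\ref{sec:funk-hecke}, every quantity becomes a Beta integral, exactly as in the computation of $R_k$.

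For the first derivative,
\[
\frac{\|\nabla\phi_{x,k}\|^2}{\|\phi_{x,k}\|^2}=k^2\,\frac{\int_0^1 r^{2k-2+\alpha-\frac12}\,r(1-r)\,(r(1-r))^{\alpha-\frac12}dr}{\int_0^1 r^{2k+\alpha-\frac12}(1-r)^{\alpha-\frac12}dr},
\]
which behaves like $k^2\cdot\frac{\alpha+\frac12}{2k}=O(k)$ by the same Gamma-function identities. For the second derivative, the $g_k''(1-t^2)$ term similarly gives $k^4\cdot O(k^{-2})=O(k^2)$. The $g_k'\,t$ term also gives $k^2\cdot O(1)$, but that seems too crude. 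I would first try to bound it using the identity $\Delta\phi=(1-t^2)g_k''-(n-1)t\,g_k'$, noting that $\|\nabla^2\phi\|_{{\mathscr L}^2}$ is controlled by $\|\Delta\phi\|+\|\phi\|$ via the Bochner formula. An alternative is to observe that $\phi_{x,k}$ has degree $k$, so Bernstein-type eigenvalue bounds combined with concentration give the $\sqrt{\cdot}$ gain.

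\textbf{Main obstacle.} The genuine difficulty is the cancellation between the two terms of $\Delta\phi_{x,k}$, both of size $k^2$ near $t=1$. One must show $\|\Delta\phi_{x,k}\|^2=O(k^2)\|\phi_{x,k}\|^2$ rather than $O(k^4)\|\phi_{x,k}\|^2$. I will compute $\|\Delta \phi_{x,k}\|^2$ exactly via Beta integrals, expanding $(1-t^2)g_k''-(n-1)tg_k'$ in the variable $r$, and check that the leading $k^4$ coefficients cancel. Should they fail to cancel, the same plan still yields the case $s=1$ rigorously, and for $s=2$ one would fall back to interpolation. Combining Steps 1–3 then gives $O(k/d^2)$ for $s=1$ and $O(k^2/d^4)$ for $s=2$.
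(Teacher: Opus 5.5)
Your Steps 1--3 follow the paper's argument: Pythagoras, the comparison $\|\cdot\|_{\mathscr L^2(\mu)}\le M^{1/2}\|\cdot\|_{\mathscr L^2(\lambda)}$, the Jackson bound \eqref{eq:jackson-sphere}, Leibniz with bounded derivatives of $g$, then Funk--Hecke and Beta integrals for the zonal derivatives of $\phi_{x,k}$. These steps already prove both cases. The ``main obstacle'' in your last paragraph does not exist, so the hedge about falling back to interpolation should be deleted.

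After Step 2, all that remains for $s=2$ is $\|\nabla^2\phi_{x,k}\|^2_{\mathscr L^2(\lambda)}\le Ck^2\|\phi_{x,k}\|^2_{\mathscr L^2(\lambda)}$.

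\begin{itemize}
\item \emph{The Hessian.} The covariant Hessian is $g_k''(t)\,(Px)(Px)^\top-t\,g_k'(t)\,P$, with $P$ the projection onto $T_yS$ and $|Px|^2=1-t^2$. Hence $|\nabla^2\phi_{x,k}|^2\le 2(1-t^2)^2g_k''(t)^2+2(n-1)t^2g_k'(t)^2$. Your own normalized estimates for these two pieces are $k^4\cdot O(k^{-2})$ and $k^2\cdot O(1)$. Both are $O(k^2)$, which is exactly the $k^s$ you need, so nothing there is too crude.
\item \emph{The Laplacian.} $\Delta\phi_{x,k}$ needs no cancellation either. In $r=(1+t)/2$ one has $(1-t^2)g_k''=k(k-1)\,r^{k-1}(1-r)$ and $(n-1)t\,g_k'=\frac{(n-1)k}{2}(2r-1)\,r^{k-1}$. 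The normalizing weight $r^{2k+\alpha-\frac12}(1-r)^{\alpha-\frac12}$ concentrates on $1-r\lesssim 1/k$. So, relative to $\phi_{x,k}$, both terms have normalized $\mathscr L^2$ size $O(k)$, not $k^2$.
\item \emph{Conclusion.} There is no $k^4$ coefficient to cancel, and $\|\Delta\phi_{x,k}\|^2=O(k^2)\|\phi_{x,k}\|^2$ follows from the triangle inequality.
\end{itemize}

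A minor slip: the numerator of your displayed first-derivative ratio carries a spurious factor $r^{\alpha-\frac12}$. This does not affect the $O(k)$ conclusion.

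Incidentally, your description of the Hessian is more accurate than the paper's. The paper writes $|D^2\phi_{x,k}|^2=(1-t^2)^2|g_k''|^2+(n-1)(1-t^2)|g_k'|^2$. This drops the cross term $-2t(1-t^2)g_k'g_k''$ and has $1-t^2$ where $t^2$ belongs, which is why the paper calls the $g_k'$ contribution $O(k)$. With the correct expression that contribution is $O(k^2)$. This is harmless, since the $g_k''$ term $A_k$ is already $O(k^2)$, so the lemma's conclusion is unaffected.
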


\begin{proof}
    First, because of the Pythagorean Theorem, we know that:
	\begin{equation}
		0\le
		\big\|h_{x,k}\big\|_{{\mathscr L}^2(\mu)}^2
		- \big\|\Pi_d h_{x,k}\big\|_{{\mathscr L}^2(\mu)}^2
		= \big\|h_{x,k} - \Pi_d h_{x,k}\big\|_{{\mathscr L}^2(\mu)}^2.
	\end{equation}
    
	We first work in ${\mathscr L}^2(\lambda)$. Since $0<m\le f\le M<\infty$ on
	$S$, the norms $\|\cdot\|_{{\mathscr L}^2(\mu)}$ and
	$\|\cdot\|_{{\mathscr L}^2(\lambda)}$ are equivalent:
	\[
	m^{1/2}\|u\|_{{\mathscr L}^2(\lambda)}
	\le \|u\|_{{\mathscr L}^2(\mu)}
	\le M^{1/2}\|u\|_{{\mathscr L}^2(\lambda)},\qquad \forall u.
	\]
	In particular,
	\[
	\big\|h_{x,k} - \Pi_d h_{x,k}\big\|_{{\mathscr L}^2(\mu)}
	\le M^{1/2}\,\big\|h_{x,k} - \Pi_d h_{x,k}\big\|_{{\mathscr L}^2(\lambda)}.
	\]
	Using that $\Pi_d h_{x,k}\in{\mathscr V}_d$, we have
	\[
	\big\|h_{x,k} - \Pi_d h_{x,k}\big\|_{{\mathscr L}^2(\lambda)}
	\leq \inf_{p \in \mathscr V_d} \norm{h_{x,k}-p}_{{\mathscr L}^2(\lambda)}.
	\]

    By assumption \(g\in\mathscr H^s(S)\) and \(\partial^{\beta} g\) is bounded for every \(|\beta|\leq s\). The Sobolev space defined via the Laplace--Beltrami operator admits an equivalent characterization in terms of angular derivatives $D_{i,j}$, see \cite[Chap.~4, Sec.~1.8]{DaiXu2013}. Since these operators are linear combinations of Euclidean derivatives, the Sobolev norm is equivalent to a norm involving Euclidean derivatives restricted to the sphere.
    Now apply the Jackson inequality~\eqref{eq:jackson-sphere} to $h=h_{x,k}$:
	\[
	\inf_{p \in \mathscr V_d} \norm{h_{x,k}-p}_{{\mathscr L}^2(\lambda)}
	\le C_s\,d^{-s}\,\|h_{x,k}\|_{{\mathscr H}^s(S)}
    \leq C'_s\,d^{-s}\, \sqrt{\sum_{|\alpha|\le s} \|\partial^\alpha h_{x,k}\|_{{\mathscr L}^2({\lambda})}^2},
	\]
    for some constants \(C_s, C'_s >0\).
        
    Using the Leibniz rule, we can bound \( \|\partial^\beta h_{x,k}\|_{{\mathscr L}^2({\lambda})}\) by \(\|\partial^\beta \phi_{x,k}\|_{{\mathscr L}^2({\lambda})}\) times a constant that depends on the bound of \( \partial^{\beta} g\).
    Then, there exists a constant \(C>0\) depending only on \(s\) and \(g\) such that
    \[
    \frac{\|h_{x,k} -\Pi_d h_{x,k}\|_{{\mathscr L}^2(\lambda)}^2}
        {\|\phi_{x,k}\|_{{\mathscr L}^2(\lambda)}^2}
    \leq
    C\,d^{-2s}
    \frac{\sum_{|\alpha|\le s} \|\partial^\alpha \phi_{x,k}\|_{{\mathscr L}^2({\lambda})}^2}
    {\|\phi_{x,k}\|_{{\mathscr L}^2(\lambda)}^2}.
    \]

    \textbf{Case $\boldsymbol{s=1}$.} 
    In this case, the previous bound reduces to
    \[
    \frac{\|h_{x,k} -\Pi_d h_{x,k}\|_{{\mathscr L}^2(\lambda)}^2}
        {\|\phi_{x,k}\|_{{\mathscr L}^2(\lambda)}^2}
    \leq
    C\,d^{-2}
    \left( 1 +
    \frac{ \|\nabla \phi_{x,k}\|_{{\mathscr L}^2({\lambda})}^2}
    {\|\phi_{x,k}\|_{{\mathscr L}^2(\lambda)}^2} \right).
    \]

    We want to study the behavior of
    \[
    R_1(k) := \frac{ \|\nabla \phi_{x,k}\|_{{\mathscr L}^2({\lambda})}^2}
    {\|\phi_{x,k}\|_{{\mathscr L}^2(\lambda)}^2}
    \]
    Observe that
    \[
    |\nabla_{S^{n-1}}\phi_{x,k}(y)|^2=(1-t^2)|g_k'(t)|^2,
    \]
    and 
    \[
    g_k'(t)=\frac{k}{2}\Big(\frac{t+1}{2}\Big)^{k-1}.
    \]
    and for a zonal function,
    
    Thus, again by the Funk-Hecke reduction of Appendix \ref{sec:funk-hecke} and the change of variables \(r=(t+1)/2\),
    \[
    R_1 (k) 
    =\frac{c_{\alpha}\frac{k^2}{4}\int_{-1}^1
    \Big(\frac{t+1}{2}\Big)^{2k-2}(1-t^2)^{\alpha+\frac{1}{2}}\,dt.}
    {c_{\alpha} \int_{-1}^1 \Big(\frac{t+1}{2}\Big)^{2k}
    (1-t^2)^{\alpha-\frac{1}{2}}\,dt.}
    =\frac{k^2\, \int_0^1 r^{\,2k + \alpha  -\frac{3}{2}} (1-r)^{\alpha+\frac{1}{2}} \,dr.}
    {\int_0^1 r^{2k+\alpha-\frac{1}{2}} (1-r)^{\alpha-\frac{1}{2}}\,dr}
    \]
    If we express this in term of Beta function,
    \[
    R_1(k) = k^2 \frac{B(\,2k + \alpha  -\frac{1}{2},\alpha+\frac{3}{2})}
    {B(2k+\alpha +\frac{1}{2},\alpha +\frac{1}{2})}
    = k^2
    \frac{\Gamma\left( 2k + \alpha - \frac{1}{2} \right) \Gamma\left( \alpha + \frac{3}{2} \right)}
    {\Gamma(2k + 2\alpha + 1)}
    \frac{\Gamma(2k + 2\alpha + 1)}
    {\Gamma\left( 2k + \alpha + \frac{1}{2} \right) \Gamma\left( \alpha + \frac{1}{2} \right)}
    \]
    Recall that \( \Gamma(z+1)=z\Gamma(z)\). Then
    \[
    R_1(k) = k^2 \frac{\alpha+\frac{1}{2}}{2k+\alpha-\frac{1}{2}}
    = k^2 \frac{n-1}{4k+n-3},
    \]
    and the first statement holds.

    \textbf{Case $\boldsymbol{s=2}$.}
    Assume now that $g\in\mathscr H^2(S)$ and that all derivatives
    of order $\le2$ are bounded. Applying Jackson's inequality with $s=2$,
    \[
    \frac{\|h_{x,k} -\Pi_d h_{x,k}\|_{{\mathscr L}^2(\lambda)}^2}{\|\phi_{x,k}\|_{{\mathscr L}^2(\lambda)}^2}
    \le
    C\,d^{-4} \left( 1 +R_1(k) +R_2(k)\right),
    \]
    where
    \[
    R_2(k) := \frac{\|D^2 \phi_{x,k}\|_{{\mathscr L}^2(\lambda)}^2}
    {\|\phi_{x,k}\|_{{\mathscr L}^2(\lambda)}^2}.
    \]
    
    Again, since $\phi_{x,k}$ is zonal with respect to $x$, direct computation gives
    \[
    |D^2_{S^{n-1}}\phi_{x,k}(y)|^2
    =
    (1-t^2)^2 |g_k''(t)|^2 + (n-1)(1-t^2)|g_k'(t)|^2,
    \]
    where \(g_k''(t)=\frac{k(k-1)}{4}\Big(\frac{t+1}{2}\Big)^{k-2}\).
    
    Therefore, by the Funk--Hecke reduction,
    \[
    \|D^2 \phi_{x,k}\|_{{\mathscr L}^2(\lambda)}^2
    =
    c_\alpha
    \int_{-1}^1
    \Big[
    (1-t^2)^2 |g_k''(t)|^2
    +
    (n-1)(1-t^2)|g_k'(t)|^2
    \Big]
    (1-t^2)^{\alpha-\frac12}\,dt.
    \]
    
    Splitting the two contributions,
    \[
    \|D^2 \phi_{x,k}\|_{{\mathscr L}^2(\lambda)}^2
    =
    A_k + B_k,
    \]
    where
    \[
    A_k
    =
    c_\alpha \frac{k^2(k-1)^2}{16}
    \int_{-1}^1
    \Big(\frac{t+1}{2}\Big)^{2k-4}
    (1-t^2)^{\alpha+\frac32}\,dt,
    \]
    and
    \[
    B_k
    =
    c_\alpha (n-1)\frac{k^2}{4}
    \int_{-1}^1
    \Big(\frac{t+1}{2}\Big)^{2k-2}
    (1-t^2)^{\alpha+\frac12}\,dt.
    \]
    
    The term $B_k$ is of the same type as in $R_1(k)$ and therefore contributes $O(k)$ after normalization.
    
    Using again the change of variables $r=(t+1)/2$, we obtain
    \[
    A_k = k^2(k-1)^2
    \frac{ \int_0^1 r^{\,2k+\alpha-\frac52} (1-r)^{\alpha+\frac32}\,dr}
    {\int_0^1 r^{\,2k+\alpha-\frac12} (1-r)^{\alpha-\frac12}\,dr}
    \]
    
    Repeating the same arguments with the Beta function and the Stirling's formula, it holds
    \[
    A_k = O(k^2).
    \]
    
    Since \(R_1(k)=O(k)\) and \(B_k = O(k)\), the dominant contribution is $A_k$ and this completes the proof.
\end{proof}

\subsubsection{Convergence rates}

We can now combine the approximation and projection error bounds into a
convergence Theorem for density recovery on the sphere.

\begin{theorem}[Improved density recovery on the sphere]
	\label{thm:rate-sphere}
	Let $S\subset\R^n$ be the unit sphere and $\lambda$ the normalized
	surface measure. Let $\mu$ be a probability measure on $S$ with
	strictly positive density $f$ with respect to $\lambda$. Assume that:
	\begin{itemize}
		\item[(i)] $0<m\le f\le M<\infty$ on $S$;
		\item[(ii)] the algebraic mollifiers $\phi_{x,d}$ are constructed as in  \eqref{eq:polynomial_mollifier}, choosing \(k = \lfloor d^{4/3} \rfloor.\)
	\end{itemize}

    Then, the estimator \(\widehat g_d(x)\) satisfies:
    \begin{itemize}
        \item if \(g\in\mathscr C^1(S)\),
        \(\big|\widehat g_d(x)-g(x)\big| = O\!\left(d^{-2/3}\right)\);
        \item if \(g\in\mathscr C^2(S)\),
        \(\big|\widehat g_d(x)-g(x)\big| = O\!\left(d^{-4/3}\right)\);
    \end{itemize}
    uniformly in \(x\in S\).
    The implicit constants depend only on \(n\), \(m,M\), and the Sobolev/bounded-derivative constants of \(g\).
\end{theorem}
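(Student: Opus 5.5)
The plan is to plug the two error bounds of this section into the error decomposition~\eqref{eq:error-decomposition}, with the resolution parameter $\varepsilon$ replaced by the mollifier degree $k$, and then substitute $k=\lfloor d^{4/3}\rfloor$. For every $x\in S$ and every pair $d,k$ we have
\[
\bigl|\widehat g_d(x)-g(x)\bigr|\le
\frac{\|h_{x,k}\|_{{\mathscr L}^2(\mu)}^2-\|\Pi_d h_{x,k}\|_{{\mathscr L}^2(\mu)}^2}{\|\phi_{x,k}\|_{{\mathscr L}^2(\lambda)}^2}
+\left|\frac{\int_S \phi_{x,k}^2 g\,d\lambda}{\int_S\phi_{x,k}^2\,d\lambda}-g(x)\right|.
\]
The first term is the normalized projection error, controlled by Lemma~\ref{lem:proj-error-sphere-normalized}. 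The second term is the approximation error, controlled by Lemma~\ref{lem:approx-sphere-Lip} or Lemma~\ref{lem:approx-sphere-C2}. The choice of $k$ is the one that balances the two contributions.

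\emph{Checking the hypotheses.} $S$ is compact and $g$ is continuous and positive, so the hypotheses of the lemmas follow directly from the regularity assumptions.
\begin{itemize}
\item If $g\in\mathscr C^1(S)$, then its tangential gradient is bounded. Integrating along geodesics shows that $g$ is Lipschitz with respect to $d_S$, with constant $K=\sup\|\nabla g\|$. The first derivatives are bounded, so $g\in\mathscr H^1(S)$. This gives the $s=1$ case of Lemma~\ref{lem:proj-error-sphere-normalized}.
\item If $g\in\mathscr C^2(S)$, then the Riemannian Hessian is bounded, so $\Lambda<\infty$ in Lemma~\ref{lem:approx-sphere-C2}. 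All derivatives of order at most $2$ are bounded, so $g\in\mathscr H^2(S)$. This gives the $s=2$ case.
\end{itemize}

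\emph{Uniformity in $x$.} Both approximation lemmas already give bounds independent of $x$, because $\lambda$ and the family $\phi_{x,k}$ are rotation invariant. In Lemma~\ref{lem:proj-error-sphere-normalized}, the ratios $R_1(k)$ and $R_2(k)$ are likewise independent of $x$, by the Funk--Hecke reduction. The remaining constants depend only on $M$, on $C_s$ from the Jackson inequality, and on the sup bounds of the derivatives of $g$. I will state this uniformity explicitly, since the $O(\cdot)$ in that lemma is written without tracking it.

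\emph{Rate computation.} Since $k=\lfloor d^{4/3}\rfloor$, we have $d^{4/3}/2\le k\le d^{4/3}$ for $d\ge1$, so $k\asymp d^{4/3}$.
\begin{itemize}
\item In the $\mathscr C^1$ case, the approximation error is $O(k^{-1/2})=O(d^{-2/3})$ by Lemma~\ref{lem:approx-sphere-Lip}. The projection error is $O(k/d^2)=O(d^{4/3-2})=O(d^{-2/3})$.
\item In the $\mathscr C^2$ case, the approximation error is $O(k^{-1})=O(d^{-4/3})$ by Lemma~\ref{lem:approx-sphere-C2}. The projection error is $O(k^2/d^4)=O(d^{8/3-4})=O(d^{-4/3})$.
\end{itemize}
Adding the two terms gives the claimed rates. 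One can check that $\gamma=4/3$ is exactly the exponent equalizing $\gamma/2=2-\gamma$ in the first case and $\gamma=4-2\gamma$ in the second.

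\emph{Main obstacle.} The substitution itself is routine. The point needing care is that Lemma~\ref{lem:proj-error-sphere-normalized} bounds $(1+R_1(k))/d^{2}$ and $(1+R_1(k)+R_2(k))/d^4$. We need these to be $O(k/d^2)$ and $O(k^2/d^4)$ uniformly, including in the regime $k\gg d$, where the mollifier has degree larger than the projection space. This holds because the Jackson inequality applies for all $k$ and $d$, and because $R_1(k)=k^2(n-1)/(4k+n-3)=O(k)$ and $R_2(k)=O(k^2)$ with constants independent of $d$. So no additional restriction linking $k$ and $d$ is needed. I will also note that $\Pi_d$ is the ${\mathscr L}^2(\mu)$-projection, whereas Jackson's inequality is stated in ${\mathscr L}^2(\lambda)$; this costs only the factor $M$, as in the proof of that lemma.
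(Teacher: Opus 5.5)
Your proposal is correct and follows the paper's proof. You insert the approximation-error bounds $O(k^{-1/2})$ ($\mathscr C^1$, via the Lipschitz lemma) and $O(k^{-1})$ ($\mathscr C^2$), together with the normalized projection-error bounds $O(k/d^2)$ and $O(k^2/d^4)$, into the error decomposition and balance them with $k\asymp d^{4/3}$. Your extra checks (the lemmas' hypotheses from $\mathscr C^1$/$\mathscr C^2$ regularity, the floor via $d^{4/3}/2\le k\le d^{4/3}$, uniformity in $x$, and validity when $k\gg d$) only make explicit what the paper leaves implicit.
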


\begin{proof}
Recall \(h_{x,k}=\phi_{x,k}/f\), and let \(\Pi_d\) denote the
orthogonal projection onto spherical polynomials of degree \(\le d\)
in \({\mathscr L}^2(\mu)\).
The error decomposition is
\[
\big|\widehat{g}_d(x) - g(x)\big|
\le \underbrace{
    \frac{
        \big|\|h_{x,k}\|_{{\mathscr L}^2(\mu)}^2
        - \|\Pi_d h_{x,k}\|_{{\mathscr L}^2(\mu)}^2\big|
    }{
        \|\phi_{x,k}\|_{{\mathscr L}^2(\lambda)}^2
    }
}_{\text{projection error}}
+ \underbrace{
    \left|
    \frac{\|h_{x,k}\|_{{\mathscr L}^2(\mu)}^2}{\|\phi_{x,k}\|_{{\mathscr L}^2(\lambda)}^2}
    - g(x)
    \right|
}_{\text{approximation error}}.
\]

For the case of regularity \(s=1\), from Lemma~\ref{lem:approx-sphere-Lip} we know that the approximation error is \( O \left(k^{-1/2}\right) \), and from Lemma~\ref{lem:proj-error-sphere-normalized} that the projection error is \( O\left( k d^{-2} \right) \). 
Setting \(k=d^\gamma\) and equating exponents, we obtain
\[
-\frac{\gamma}{2}=\gamma-2
\quad\Longrightarrow\quad
\gamma=\frac{4}{3}.
\]
Thus both terms are \(O(d^{-2/3})\).

In the case \(g\in\mathscr C^2(S)\), applying Lemmas~\ref{lem:approx-sphere-C2} and~\ref{lem:proj-error-sphere-normalized}, the approximation error is \(O(k^{-1})\) and the projection error is \(O(k^2 d^{-4})\).
Setting again $k=d^\gamma$ and equating exponents, we obtain
\[
-\gamma = 2\gamma - 4
\quad\Longrightarrow\quad
\gamma=\frac{4}{3}.
\]
Thus both terms are \(O(d^{-4/3})\).
\end{proof}

\begin{remark}
The regularity assumption in the first part can be weakened.
It is sufficient to assume that $g\in\mathscr H^1(S)$ and that $g$
is Lipschitz, which are the hypotheses required in
Lemmas~\ref{lem:approx-sphere-Lip} and
\ref{lem:proj-error-sphere-normalized}.
\end{remark}

    \section{Numerical experiments}
\label{sec:numerics}

    In this section we report some numerical experiments illustrating the performance of the density estimator based on the mollified Christoffel--Darboux (MCD) kernel on the sphere. 
    The Python code used to produce these examples and additional numerical details are available via GitHub at \url{https://github.com/LeandroBentancur/mollifiedcdkernel}. 

    First, we build an orthonormal basis of spherical harmonics \(b = \{ b_1, \ldots, b_N \}\) and the family of algebraic mollifiers constructed from Gegenbauer polynomials introduced in Subsection~\ref{subsec:approx-error-sphere}. Then, Lemma \ref{lem: CD_kernel_basics} gives us an explicit representation 
    \[
    DMCD_{d,\epsilon}^{\mu}(x,x) = [\Pi_d(\phi_{x,d})]_b^\top M^{-1}(\mu) [\Pi_d(\phi_{x,d})]_b,
    \]
    where \([\Pi_d(\phi_{x,d})]_b\) are the coefficients of \(\Pi_d(\phi_{x,d})\) in the basis \(b\). As \(b\) is an orthonormal basis of spherical harmonics, we can obtain these coefficients using the Funk-Hecke formula (Theorem \ref{thm:Funk-Hecke}) just as a projection of \(g_d\) to \(V_d\). All integrals are approximated using quadrature rules on \(S^2\).

    The computational bottleneck of the method is solving the linear systems associated with the moment matrix, whose dimension is of order \(\binom{n+d}{n,d}\). In our implementation this is handled via a Cholesky factorization. In addition, for very concentrated mollifiers, numerical instabilities may arise.
    
    The von Mises-Fisher distribution in the sphere with mean direction \(\mu\) and concentration parameter \(\kappa\) is defined by the density function \(f(x;\mu,\kappa) = C(\kappa) \exp(\kappa \mu^\top x)\), where \(\kappa >0\), \( \lVert \mu \rVert = 1\) and \(C(\kappa)\) is a normalization constant. 
    We denote by \(f_{\kappa}\) the equally weighted mixture of three von Mises-Fisher distributions with means on the three canonical basis vectors of \(\mathbb R^3\) and common concentration parameter \(\kappa\). Specifically, we analyze the case where $\kappa = 3$ (denoted as $f_3$) to evaluate the behavior of the MCD kernel density estimator over $S^2$ using the family of algebraic mollifiers.
        
    \begin{figure}[htbp]
    \centering
    \includegraphics[width=0.65\textwidth]{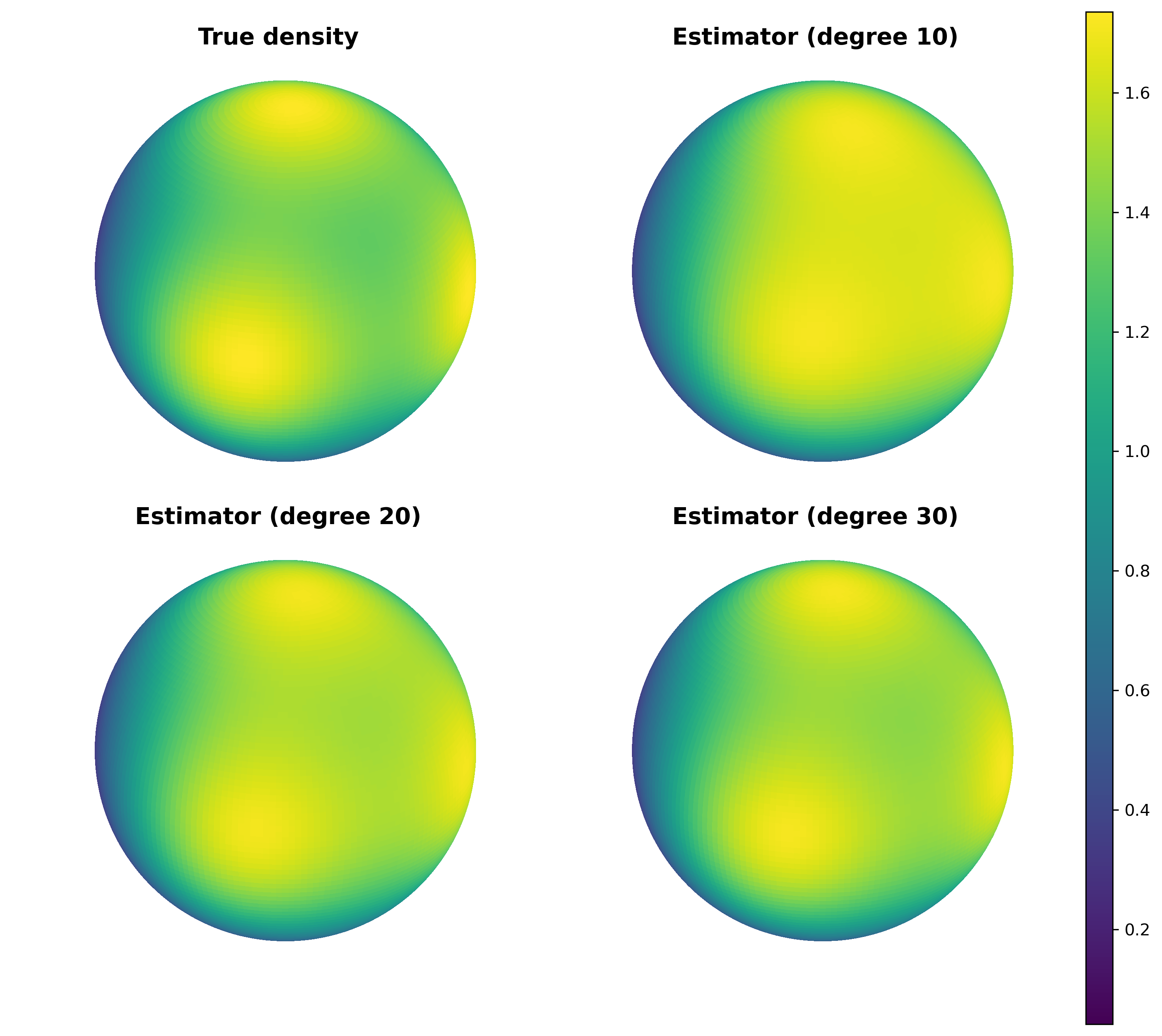}
    \caption{Plot of the $f_3$ density alongside its approximations using the MCD kernel density estimator for degrees $10$, $20$, and $30$.}
    \label{fig:plot_comparison}
    \end{figure}
    
    Figure~\ref{fig:plot_comparison} illustrates the true $f_3$ density and its MCD kernel approximations for various degrees. As shown in Figure~\ref{fig:chart_errors}, the approximation error exhibits a decay of $\mathcal{O}(d^{-4/3})$, which dominates the total error. These experiments demonstrate that the MCD kernel reproduces the theoretically predicted error decomposition in a nontrivial example on $S^2$.
    
    \begin{figure}[htbp]
    \centering
    \includegraphics[width=0.6\textwidth]{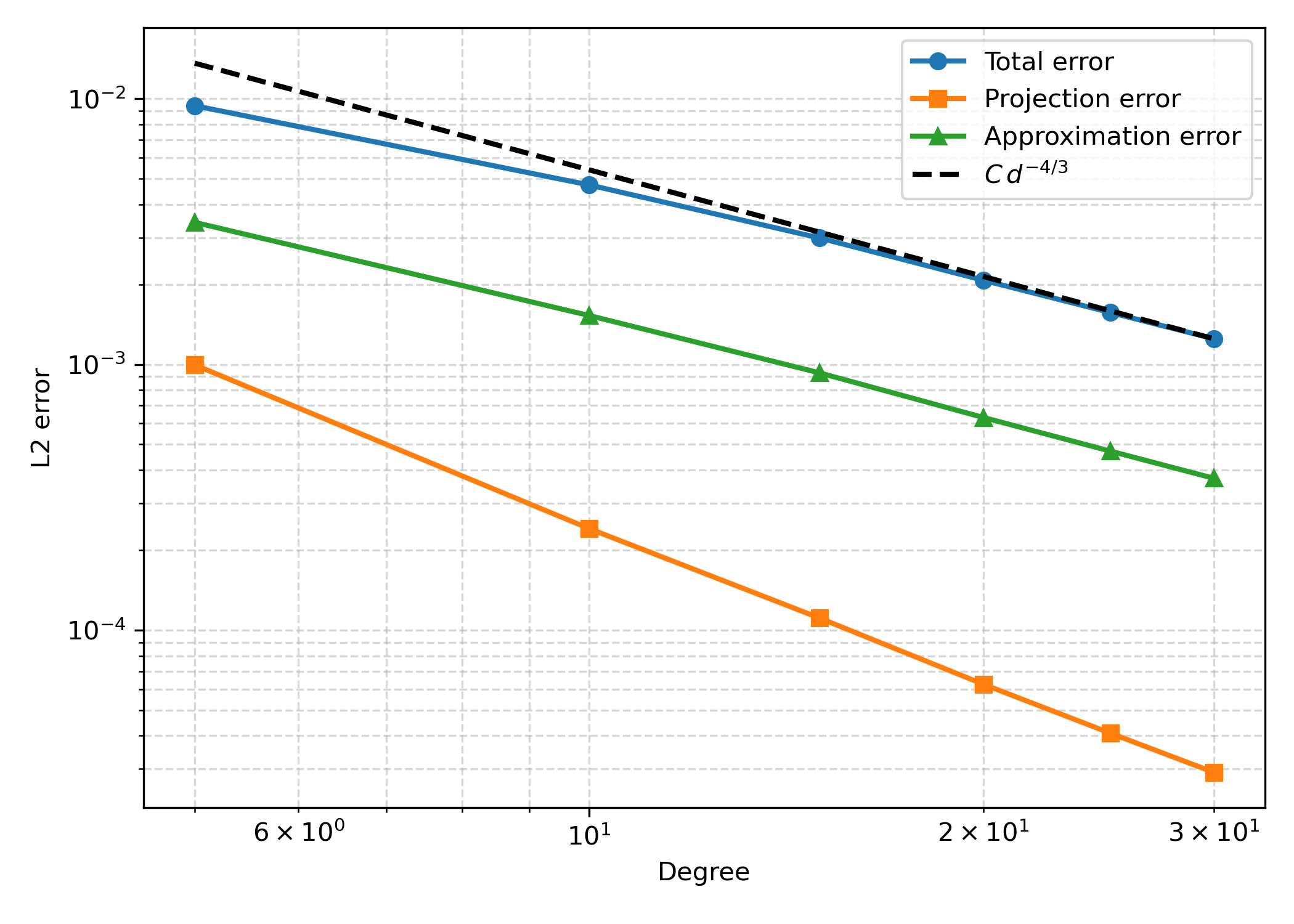}
    \caption{${\mathscr L}^2$ errors for the recovery of the $f_3$ density as a function of the degree $d$.}
    \label{fig:chart_errors}
    \end{figure}
    	
\appendix

\section{Trigonometric inequality}\label{sec:trigo}

\begin{lemma}
	\label{lem:arccos-bound}
	For every $\theta\in[0,\pi]$ one has
	\begin{equation}
		\label{eq:cos-quadratic}
		1 - \cos\theta \;\ge\; \frac{2}{\pi^2}\,\theta^2.
	\end{equation}
	Equivalently, for every $t\in[-1,1]$,
	\begin{equation}
		\label{eq:arccos-root}
		\arccos(t) \;\le\; \frac{\pi}{\sqrt{2}}\,\sqrt{1-t}.
	\end{equation}
\end{lemma}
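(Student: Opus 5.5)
We prove the first inequality \eqref{eq:cos-quadratic} via concavity and then deduce \eqref{eq:arccos-root} by a change of variables. Write $1-\cos\theta = 2\sin^2(\theta/2)$, so that \eqref{eq:cos-quadratic} becomes
\[
2\sin^2(\theta/2)\ge \frac{2}{\pi^2}\theta^2 .
\]
This is equivalent, for $\theta\in[0,\pi]$ where both sides are nonnegative, to
\[
\sin(\theta/2)\ge \frac{\theta}{\pi}.
\]
Setting $u=\theta/2\in[0,\pi/2]$, this is Jordan's inequality $\sin u\ge \frac{2}{\pi}u$.

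To prove Jordan's inequality, use the concavity of $\sin$ on $[0,\pi/2]$ (its second derivative $-\sin u$ is $\le 0$ there). A concave function lies above the chord joining its endpoint values. On $[0,\pi/2]$ this chord joins $(0,0)$ to $(\pi/2,1)$, i.e.\ it is the line $u\mapsto 2u/\pi$, which gives the claim. Equality holds at $u=0$ and $u=\pi/2$, i.e.\ at $\theta=0$ and $\theta=\pi$, so the constant $2/\pi^2$ is sharp.

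For the equivalent form, given $t\in[-1,1]$ put $\theta=\arccos t\in[0,\pi]$, so $\cos\theta=t$. Then \eqref{eq:cos-quadratic} reads $1-t\ge \frac{2}{\pi^2}\arccos(t)^2$. Taking nonnegative square roots yields \eqref{eq:arccos-root}. Conversely, since $\arccos$ is a bijection $[-1,1]\to[0,\pi]$, the two statements are equivalent.

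There is no real obstacle. The only point requiring care is checking that the signs allow taking square roots, and both sides are nonnegative on the stated ranges.
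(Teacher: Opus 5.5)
Your proof is correct, but it takes a different route from the paper. The paper never uses the half-angle identity. It works with $h(\theta)=(1-\cos\theta)/\theta^2$ directly and writes $h'(\theta)=k(\theta)/\theta^3$ with $k(\theta)=\theta\sin\theta-2(1-\cos\theta)$. It then shows $k\le 0$ on $[0,\pi]$ from $k''(\theta)=-\theta\sin\theta\le 0$ and $k(0)=k'(0)=0$, so $h$ is decreasing and $h(\theta)\ge h(\pi)=2/\pi^2$.

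Your reduction via $1-\cos\theta=2\sin^2(\theta/2)$ to Jordan's inequality replaces this two-level derivative analysis with a single concavity fact about $\sin$ and the chord bound. That is shorter, and it makes the equality cases $\theta=0,\pi$, and hence the sharpness of $2/\pi^2$, immediate.

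The paper's argument yields the slightly more informative statement that $h$ is decreasing on $(0,\pi]$. Your approach gives this too at no extra cost. Note that $h(\theta)=\tfrac12\bigl(\sin(\theta/2)/(\theta/2)\bigr)^2$, and a concave function vanishing at $0$ has a nonincreasing difference quotient. So $\sin u/u$ is positive and nonincreasing on $(0,\pi/2]$, and the same monotonicity follows.

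Your passage to the $\arccos$ form matches the paper's, with both sides nonnegative before taking square roots. You also add the converse direction via bijectivity of $\arccos:[-1,1]\to[0,\pi]$, which the paper only asserts with the word ``equivalently''.
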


\begin{proof}
	Define
	\[
	h(\theta) := \frac{1 - \cos\theta}{\theta^2},\qquad \theta\in(0,\pi].
	\]
	If we can show that $h$ is decreasing on $(0,\pi]$, then
	\[
	h(\theta) \;\ge\; h(\pi)
	= \frac{1-\cos\pi}{\pi^2}
	= \frac{2}{\pi^2},
	\]
	which is exactly \eqref{eq:cos-quadratic}.
	
	Differentiating,
	\[
	h'(\theta)
	= \frac{\theta^2\sin\theta - 2\theta(1-\cos\theta)}{\theta^4}
	= \frac{k(\theta)}{\theta^3},
	\]
	where
	\[
	k(\theta) := \theta\sin\theta - 2(1-\cos\theta).
	\]
	Thus $\operatorname{sign}(h'(\theta)) = \operatorname{sign}(k(\theta))$ for $\theta>0$.
	Differentiate $k$:
	\[
	k'(\theta) = \theta\cos\theta - \sin\theta,\qquad
	k''(\theta) = -\theta\sin\theta.
	\]
	For $\theta\in[0,\pi]$ we have $\sin\theta\ge0$, hence $k''(\theta)\le0$, so $k'$ is
	decreasing on $[0,\pi]$. Using the Taylor expansions $\sin\theta=\theta+O(\theta^3)$,
	$\cos\theta=1+O(\theta^2)$ as $\theta\to0$, one checks that $k'(0)=0$.
	Since $k'$ is decreasing and $k'(0)=0$, it follows that $k'(\theta)\le0$ for
	$\theta\in[0,\pi]$, so $k$ is decreasing on $[0,\pi]$. Again using Taylor expansion,
	$k(0)=0$, hence $k(\theta)\le0$ for all $\theta\in[0,\pi]$. Therefore $h'(\theta)\le0$
	on $(0,\pi]$, so $h$ is decreasing and \eqref{eq:cos-quadratic} holds.
	
	To obtain \eqref{eq:arccos-root}, let $t\in[-1,1]$ and set $\theta:=\arccos(t)\in[0,\pi]$.
	Then $t=\cos\theta$ and $1-t = 1-\cos\theta$, so \eqref{eq:cos-quadratic} gives
	\[
	1-t = 1-\cos\theta \;\ge\; \frac{2}{\pi^2}\,\theta^2,
	\]
	that is
	\[
	\theta \;\le\; \frac{\pi}{\sqrt{2}}\sqrt{1-t}.
	\]
	Since $\theta=\arccos(t)$, this is exactly \eqref{eq:arccos-root}.
\end{proof}

\section{The Funk-Hecke formula}\label{sec:funk-hecke}

In this appendix we recall the Funk-Hecke formula, which allows one to
reduce integrals of zonal functions against spherical harmonics to
one-dimensional integrals on $[-1,1]$.

Let $S:=S^{n-1}\subset\R^n$ denote the unit sphere,
and let $\lambda$ be the normalized surface measure on $S$.
For each integer $\ell\ge0$, let $\mathcal H_\ell$ denote the space of
spherical harmonics of degree $\ell$, i.e., the restrictions to $S$
of homogeneous harmonic polynomials of degree $\ell$ in $\R^n$.
It is well known that
\[
\mathscr {\mathscr L}^2(S,\lambda)
= \bigoplus_{\ell=0}^\infty \mathcal H_\ell,
\]
and that each $\mathcal H_\ell$ is finite-dimensional. We fix once and for all
an orthonormal basis $\{Y_{\ell,k}\}_{k=1}^{N_\ell}$ of $\mathcal H_\ell$ in
${\mathscr L}^2(S,\lambda)$.

A function $K:S\times S\to\R$ is called \emph{zonal}
(or rotationally invariant) if it depends only on the inner product:
\[
K(x,y) = F(\langle x,y\rangle),
\qquad x,y\in S,
\]
for some function $F:[-1,1]\to\R$. In this case we will also write
$K_F(x,y)$ for the kernel associated with $F$.

Let $\alpha := (n-1)/2$ and denote by $C_\ell^{(\alpha)}$ the Gegenbauer
polynomial of degree $\ell$. Let $w_\alpha(t) := (1-t^2)^{\alpha-1/2}$ be the
associated weight function on $[-1,1]$.

\begin{theorem}[Funk-Hecke formula]
	\label{thm:Funk-Hecke}
	Let $F:[-1,1]\to\mathbb C$ be such that
	\[
	\int_{-1}^1 |F(t)|\,w_\alpha(t)\,dt < \infty.
	\]
	Fix $x\in S$ and define the zonal kernel
	\[
	K_F(x,y) := F(\langle x,y\rangle),\qquad y\in S.
	\]
	Then for every spherical harmonic $Y_{\ell,k}\in\mathcal H_\ell$ one has
	\[
	\int_{S} K_F(x,y)\,Y_{\ell,k}(y)\,d\lambda(y)
	= \lambda_\ell(F)\,Y_{\ell,k}(x),
	\]
	where the scalar coefficient $\lambda_\ell(F)$ depends only on $\ell$ and $F$
	and is given explicitly by
	\[
		\lambda_\ell(F)
		= \frac{\omega_{n-2}}{\omega_{n-1}C_\ell^{(\alpha)}(1)}
		\int_{-1}^1 F(t)\,C_\ell^{(\alpha)}(t)\,w_\alpha(t)\,dt.
	\]
	Here $\omega_{n-1}$ and $\omega_{n-2}$ denotes the surface area of $S^{n-1}$ and $S^{n-2}$, respectively.
	In particular, the integral on the left-hand side is a multiple of
	$Y_{\ell,k}(x)$, and the multiple is the same for all $k=1,\dots,N_\ell$.
\end{theorem}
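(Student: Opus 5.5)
The plan is the classical route through the addition theorem for spherical harmonics together with rotation invariance. First I reduce to the case of a polynomial kernel function $F$. I expand $F$ in Gegenbauer polynomials, $F(t)=\sum_j a_j C_j^{(\alpha)}(t)$ with respect to $w_\alpha$. Integrability $\int|F|w_\alpha<\infty$ makes the coefficients $a_\ell$ well defined. The general case then follows by a density argument: both sides of the identity are continuous linear functionals of $F$ in the weighted $L^1(w_\alpha)$ norm, because $Y_{\ell,k}$ and $C_\ell^{(\alpha)}$ are bounded on $S$ and on $[-1,1]$. Approximating $F$ in $L^1(w_\alpha)$ by polynomials therefore suffices. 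To justify that the left side is controlled by $\int|F|w_\alpha$, I use the one-dimensional reduction
\[\int_S G(\langle x,y\rangle)\,d\lambda(y)=\frac{\omega_{n-2}}{\omega_{n-1}}\int_{-1}^1 G(t)\,w_\alpha(t)\,dt,\]
valid for nonnegative $G$. This reduction comes from slicing $S$ into the $(n-2)$-spheres $\{\langle x,y\rangle=t\}$ of radius $\sqrt{1-t^2}$, using the Jacobian $(1-t^2)^{(n-3)/2}\,dt$.

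Next I need the reproducing kernel of $\mathcal H_j$. By the addition theorem, $Z_j(x,y):=\sum_{k}Y_{j,k}(x)Y_{j,k}(y)$ is zonal and equals $\frac{N_j}{C_j^{(\alpha)}(1)}C_j^{(\alpha)}(\langle x,y\rangle)$ with the correct Gegenbauer index. I will justify this briefly. $Z_j$ is independent of the choice of orthonormal basis and is invariant under simultaneous rotations. As a function of $y$ it lies in $\mathcal H_j$ and is invariant under the stabilizer of $x$. Such functions are one-dimensional, spanned by the Gegenbauer polynomial, and the constant is fixed by evaluating the trace $\int Z_j(x,x)\,d\lambda=N_j$.

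The identity then follows. Orthogonality of the $\mathcal H_j$ gives $\int_S Z_j(x,y)Y_{\ell,k}(y)\,d\lambda(y)=\delta_{j\ell}Y_{\ell,k}(x)$. Hence a polynomial $F$ produces $\lambda_\ell(F)=a_\ell\,C_\ell^{(\alpha)}(1)/N_\ell$. I then rewrite $a_\ell$ using orthogonality of Gegenbauer polynomials in $L^2(w_\alpha)$. I also need to express $\|C_\ell^{(\alpha)}\|^2_{w_\alpha}$ via the slicing formula applied to $G=(C_\ell^{(\alpha)})^2$, which is proportional to $Z_\ell^2$. Using $\int_S Z_\ell(x,y)^2\,d\lambda(y)=Z_\ell(x,x)=N_\ell$, this gives $\frac{\omega_{n-2}}{\omega_{n-1}}\|C_\ell^{(\alpha)}\|^2_{w_\alpha}=C_\ell^{(\alpha)}(1)^2/N_\ell$. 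Substituting, the stated formula
\[\lambda_\ell(F)=\frac{\omega_{n-2}}{\omega_{n-1}C_\ell^{(\alpha)}(1)}\int_{-1}^1 F\,C_\ell^{(\alpha)}\,w_\alpha\,dt\]
drops out.

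The main obstacle is bookkeeping of the parameter $\alpha$. For $S^{n-1}$ the slicing Jacobian gives the weight $(1-t^2)^{(n-3)/2}$, which corresponds to the Gegenbauer index $(n-2)/2$. The appendix instead sets $\alpha=(n-1)/2$, and Section~\ref{subsec:approx-error-sphere} uses $(n-2)/2$. I would therefore first verify the slicing Jacobian carefully and state the result with whichever index it produces. I would then note that the formula holds verbatim once $w_\alpha$ and $C_\ell^{(\alpha)}$ share that same index, since only this consistency is used in the argument.
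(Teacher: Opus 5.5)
The paper does not actually prove this statement. It quotes it as a classical fact with a pointer to M\"uller's monograph, so there is no argument of the authors' to compare yours with. Your plan is the standard addition-theorem proof, and its bookkeeping is right. For polynomial $F$ you get $\lambda_\ell(F)=a_\ell\,C_\ell(1)/N_\ell$, and together with $\frac{\omega_{n-2}}{\omega_{n-1}}\|C_\ell\|_{w}^2=C_\ell(1)^2/N_\ell$ this gives exactly the stated coefficient. Both sides are indeed continuous in $L^1(w)$, so the density step is sound. The Gegenbauer expansion is only needed for polynomial $F$, where it is a finite sum.

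Your suspicion about the index is correct, and you should commit to it rather than hedge. The slicing Jacobian forces $w(t)=(1-t^2)^{(n-3)/2}$, i.e.\ $\alpha=(n-2)/2$, which is the value used in the proof of Lemma~\ref{lem:approx-sphere-Lip}. With the appendix's $\alpha=(n-1)/2$ the statement is false. For $n=3$, $\ell=0$ and $F\equiv1$, the left-hand side is $Y_{0,1}(x)$, while the right-hand side is $\frac{2\pi}{4\pi}\int_{-1}^1\sqrt{1-t^2}\,dt\;Y_{0,1}(x)=\frac{\pi}{4}\,Y_{0,1}(x)$. So what you prove is the corrected statement.

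Your closing remark should be read carefully. It is not enough that $w_\alpha$ and $C_\ell^{(\alpha)}$ share an index; the printed version satisfies that and is still wrong. The shared index must be the one produced by slicing. It enters your argument in two places: the slicing formula itself, and the identification of the zonal element of $\mathcal H_j$ with $C_j$.

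That identification (``such functions are one-dimensional, spanned by the Gegenbauer polynomial'') is the one step you assert rather than prove, and it is exactly where $(n-2)/2$ is forced. It can be filled in as follows.
\begin{itemize}
\item For $n\ge3$ the stabilizer of $x$ acts transitively on each slice $\{\langle x,y\rangle=t\}$, so an invariant $Y\in\mathcal H_j$ has the form $P(\langle x,y\rangle)$.
\item Write $Y$ as the restriction of a harmonic polynomial $H$, and average $H(tx+se)$ with $H(tx-se)$, where $e\perp x$ is a unit vector and $s=\sqrt{1-t^2}$. This removes the odd powers of $s$, so $P$ is a polynomial of degree at most $j$.
\item For $\deg q<j$, the function $q(\langle x,\cdot\rangle)$ lies in $\bigoplus_{i<j}\mathcal H_i\perp\mathcal H_j$. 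The slicing formula then gives $\int_{-1}^1 P\,q\,(1-t^2)^{(n-3)/2}\,dt=0$, so $P$ is a multiple of $C_j^{((n-2)/2)}$.
\end{itemize}

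For $n=2$ the argument as written fails. The rotation stabilizer of $x$ is trivial, and $C_j^{(0)}\equiv0$ for $j\ge1$ in the standard normalization. There one uses $O(2)$ and the normalized limit $C_j^{(\alpha)}/C_j^{(\alpha)}(1)\to T_j$.

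Once this lemma is in hand, there is a shorter route that needs neither the expansion nor the density argument. Averaging $Y_{\ell,k}$ over the stabilizer of $x$ does not change its integral against $F(\langle x,\cdot\rangle)$. By the lemma, the average is $Y_{\ell,k}(x)\,C_\ell(\langle x,y\rangle)/C_\ell(1)$. Applying the slicing formula to $F\,C_\ell$ then gives the result for every $F\in L^1(w)$ at once.
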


In the special case $\ell=0$, $C_0^{(\alpha)}(t)\equiv 1$ and $Y_{0,1}$ is
constant, so the formula reduces to
\[
	\int_{S} F(\langle x,y\rangle)\,d\lambda(y)
	= \frac{\omega_{n-2}}{\omega_{n-1}}
	\int_{-1}^1 F(t)\,w_\alpha(t)\,dt,
\]
which shows that such zonal integrals are independent of $x$ and can be reduced to a univariate integral on $[-1,1]$.

For more details, we refer to standard texts on spherical harmonics, e.g. \cite{Muller1966}.

\bibliographystyle{plain} 

\end{document}